\documentclass{article}
\usepackage{arxiv}
\usepackage[pagewise]{lineno}%\linenumbers

\usepackage[url=false,date=year,backend=bibtex]{biblatex}%urldate=false,
\usepackage{tikz}
\usepackage{graphicx} 
\usepackage{subcaption}
\usepackage{multirow}
\usepackage{amsmath, amsthm, amssymb, bbold}
\DeclareMathOperator*{\argmax}{argmax}
\DeclareMathOperator*{\argmin}{argmin}
\usepackage{algorithm}
\usepackage{algpseudocode}
\usepackage{url}
\usepackage{hyperref}
\newtheorem{proposition}{Proposition}

\newcommand{\R}{\mathbb{R}}
\newcommand{\diff}{\:\mathrm{d}}

\usepackage{xcolor}
\usepackage[normalem]{ulem} % for \sout

\newif\ifshowcomments
\showcommentstrue      % change to \showcommentsfalse to hide everything

\author{
David Gentile \\
Tufts University \\
\url{david.gentile@tufts.edu} \\
\And Joshua Huang \\
Tufts University \\
\url{joshua.huang@tufts.edu} \\
\And James M. Murphy\\
Tufts University \\ 
\url{jm.murphy@tufts.edu}}

\date{\today}
\hypersetup{
 pdfauthor={David Gentile},
 pdftitle={An Application of the Wasserstein Metric to the Change Point Detection Problem},
 pdfkeywords={},
 pdfsubject={},
 pdfcreator={Emacs 30.2 (Org mode 9.7.34)}, 
 pdflang={English}}
\begin{document}

\title{Wasserstein-Based Identification of Metastable States in time series Data via Change Point Detection and Segment Clustering}
\maketitle
\begin{abstract}
   Change point detection for time series analysis is a difficult and important problem in applied statistics, for which a variety of approaches have been developed in the past several decades. Here, the Wasserstein metric is employed as a tool for change-point identification in multi-dimensional time series data in order to identify clusters in time series in an unsupervised way.  We leverage the simplicity of the optimal transport cost in the 1-dimensional setting to quickly identify both a segmentation (family of change points for a trajectory) and a clustering for the data when the number of segments is much smaller than the number of data points, making no parametric assumptions about the particular distributions involved. Our change point detection method scales linearly in the size of the data and in the dimension of the samples. We test our approach on idealized synthetic data trajectories, as well as real world trajectories coming from the domain of molecular dynamics simulations and underwater acoustics. We find that segmenting these time series via change points obtained by estimating the Wasserstein metric derivative and then clustering the identified segments as measures with similarity measured by the Wasserstein metric, successfully identifies metastable states in the law of the processes.
\end{abstract}
\section{Introduction}
\label{Introduction}
\subsection{Motivation}
Change point detection (CPD) is the problem of identifying a segmentation of a time series which partitions the data into a small number of internally coherent sequences.  It is an essential task in the analysis of time series data, and many different methods, covering a wide breadth of approaches, exist to tackle the problem \cite{aminikhanghahiSurveyMethodsTime2017}. As a statistical tool, it finds applications in finance \cite{kimUnsupervisedChangePoint2022, habibiBayesianOnlineChange2021, pepelyshevRealtimeFinancialSurveillance2017}, climate modeling \cite{reevesReviewComparisonChangepoint2007, beaulieuChangepointAnalysisTool2012, gallagherChangepointDetectionClimate2013}, speech analysis \cite{tahmasbiChangePointDetection2008, cmejlaBayesianChangepointDetection2013, guptaSpeakerChangePoint2015}, edge detection \cite{tourneretBayesianOfflineDetection2003, huangLaplacianChangePoint2020, huangStatisticalTheoryEdge1988}, and health monitoring \cite{liuChangepointDetectionMethod2018, alhassanBridgingDataDiagnostics2025, wuUnsupervisedBayesianChangepoint2024}.  The task of a such an algorithm is to produce a segmentation of a time series by identifying the specific observations at which a significant qualitative change occurs --- for example, the moments where blood pressure becomes excessively elevated, or when anomalous trades are made in the stock market.  In this work, our primary interest is in leveraging the Wasserstein metric on probability measures to facilitate change point detection and, together with classic techniques in data clustering, to produce compact descriptions of large and complicated time series data sets, specifically trajectories emerging from molecular dynamics simulations (MDS).  This approach is an inversion of a pattern observed in some CPD pipelines \cite{adamsBayesianOnlineChangepoint2007, zhengLearningTransportationMode2008, reddyUsingMobilePhones2010}, wherein clustering is performed first and used to identify segmentation of the data; here we instead use change point detection to facilitate the clustering of the data by treating the segmentation as a reduced representation of the original information.

CPD is a powerful tool for the study of time series data because although a given trajectory may be large in the sense that the number of observations is large, containing potentially millions of samples, it may also be redundant if the time series tends to oscillate between few metastable states wherein the variance between observations is low. Therefore, if both the number of metastable states and the rate at which the trajectory passes between them is relatively small, significant reduction in the number of pairwise comparisons needed to apply a distance based clustering technique can be achieved by viewing it as a sequence of empirical distributions representing segments of the trajectory. How one searches for these change points is therefore a key problem for this approach. %\DCGedit{Broadly, being statistical methods, CPD schemes will either be parametric or non-parametric in the sense that assumptions are made on the underlying forms of the distributions driving the data-generating process. A major challenge associated with this approach is to develop a suitably general change point detection algorithm, one that can be employed in many different settings for molecules with metastable state for which we have no principled statistical priors, and which works quickly enough to tune any hyperparameters which may exist in the algorithm.}{Accidentally marked your comment here as resolved! Whoops. But in any case I cut this sentence and moved it to the lit review at the top of sec. 2, and I've swapped this paragraph with the next so that the general flows into the specific a little more naturally.}

The motivating problem for this paper arises from studying the trajectory data of MDS. A common assumption when studying these trajectories is that the configuration space of a given particle in a fixed setting can be partitioned into regions which represent particle states which are ``metastable'' and transitions between them; the metastable regions of the space are those where the particle tends to remain for extended periods of time, e.g., the basins of an energy well, while the transitions are the complement of the metastable sets. A natural question follows from this assumption, namely, how can we construct reduced-order models of a particle's behavior based on its trajectory data? One approach to the problem is \textit{segment-based clustering}, a technique explored in \cite{damjanovicCATBOSSClusterAnalysis2021}. By looking for change points in the time series data, a very large number of data points is reduced to a relatively small number of segments, leveraging the time sequence aspect of the data to reduce the number of pairwise distances needed to compute a similarity matrix, which is then used to identify a clustering of those segments via the clustering by density peaks algorithm \cite{rodriguezClusteringFastSearch2014, wangDensityPeakClustering2024}. The individual data points then inherit the label of their parent segment.

One of the CPD methods which originally inspired this work is BarT \cite{damjanovicModelingChangesMolecular2023}, which is a transport-based modification of the SIMPLE algorithm \cite{IdentifyingLocalizedChanges}, both of which depend on maximum likelihood estimation \cite{wassermanAllStatisticsConcise2004} and consequently must have a statistical prior about the distributions \cite{IdentifyingLocalizedChanges} \cite{damjanovicModelingChangesMolecular2023}. Both of these are methods that take an optimization-oriented view of the CPD problem, where a loss function is established which is based on the log-likelihood of observing the trajectory for a given set of change points, together with a regularization term which modulates how many change points the algorithms produce by penalizing each change point, with a lower penalty for simultaneously detected changes in several observables. These optimization procedures are extremely expensive, meaning that finding the optimal penalizing parameter for the regularization term is impractical. By changing our statistical view from a parametric to a non-parametric one, we give up theoretical guarantees about optimality of our change points, but gain a method which can analyze a trajectory much more quickly.

Optimal transport provides a toolbox which is particularly well-suited to data which has a spatial aspect. In particular, it allows us to quantify the difference between two probability measures in a way which encodes the underlying geometry of the space on which the measures are supported. This is important if we are trying to discern the points in time at which the trajectory leaves or enters distinct regions of a configuration space for extended periods via observed data. Note also that this is an advantage that transport has over other commonly employed ways of comparing distributions when the supports of the distributions differ --- for example, the Kullback-Liebler divergence is not even defined for two measures with disjoint support \cite{wassermanAllStatisticsConcise2004}. We apply this transport toolbox to the CPD problem by viewing a segmentation of a time series as a sequence of empirical probability measures. Using a moving window, we scan along the time series, measuring the distance between neighboring segments using the optimal transport cost, then identify change points as those times where the distance between the empirical measures representing two neighboring segments is unusually large.

We conceive of our time series as being a realization of some stochastic process with hidden law. If the law of our stochastic process evolves in a continuous way, then we can consider it to be a parameterized curve in the space of measures, i.e., a particle trajectory in something resembling a manifold. If this particle trajectory is concentrated in a small region of the manifold for some period of time, then the stochastic process is in a metastable state. If the particle is moving between metastable states it is in a transition state. After segmenting the trajectory by identifying change points and transitions between metastable states we will produce a reduced-order description of the trajectory by grouping together empirically observed metastable segments, using classical unsupervised clustering methods.

Following our analogy to particle trajectories in a manifold, our heuristic for metastable state detection will be the following: the particle is not moving (and hence metastable) if the speed of the trajectory is relatively small. Our approach is data-driven, and in general we cannot hope to have access to, e.g., an analytic stochastic differential equation governing the trajectory, so we must approximate the differential of the curve based on available data. %\DCGedit{Additionally, while the full Wasserstein gradient (again, thinking of \((\mathcal{P}(\Omega), W_2)\) as a Riemannian manifold) could be employed for this task, is already more information than we need. While the direction of the trajectory may be interesting in its own right, and worth investigating in future work, our simple heuristic actually only depends on the speed of the particle, and hence an estimator of the \textit{metric Wasserstein derivative} will suffice for our purpose of detecting whether or not movement is occurring in the space of probability measures.}{}

To this end, we will apply the \(2\)-Wasserstein metric as a non-parametric statistical tool for the analysis of noisy time series data. Formally, our hypothesis is this: suppose we are observing a stochastic process \(X_t\) on a compact interval \([0, T]\), and that there is some time \(t_0 \in (0,T)\) where a change in the law of the process occurs. Then, if \(\hat{\mu}_{t_{0}^{-}}\) is the empirical distribution of the process generated by the samples \(\left\{ X_t \right\}_{t \le t_0}\) and \(\hat{\mu}_{t_{0}^{+}}\) is the empirical distribution generated by the samples \(\left\{ X_t \right\}_{t_0\ge t}\), i.e.,
\[\hat{\mu}_{t_0^-} = \sum\limits_{t=0}^{t_0}\frac{1}{t_0}   \delta_{X_{t}}, \quad \hat{\mu}_{t_0^+} = \sum\limits_{t=t_0+1}^{T}\frac{1}{T -t_0}\delta_{X_{t}},\]
\newpage
then we expect that \(t_0\) should solve
\begin{equation*}
\argmax\limits_{0\le t \le T} W_2^2(\hat\mu_{t^{-}}, \hat\mu_{t^{+}}).
\end{equation*}
Naturally, we do not expect to see only one change point in a given trajectory --- rather than looking for a global maximizer of the Wasserstein distance, we will examine local maximizers. To find such maximizers, we employ a sliding window approach, fixing a window size \(w\), and compute for each \(t = w, \dots , T - w\) the Wasserstein distance between the empirical measures defined by the previous and the next \(w\) samples. Then, a cut-off is applied to identify outliers where the distance between consecutive windows is unusually large.

Once we have identified a family of change points, we have determined a segmentation of pairwise disjoint contiguous subsets of our trajectory. We would now like to classify the segments in an unsupervised manner. To do this, we once again interpret the segments as empirical measures, whose pairwise similarity can be measured via the \(2\)-Wasserstein metric.
The clustering method we employ is clustering by advanced density peaks (ADPC) \cite{rodriguezClusteringFastSearch2014,wangDensityPeakClustering2024}. ADPC is advantageous because it is entirely unsupervised, determining a number of cluster centers automatically via a statistical criterion.

\begin{figure}[htbp]
\centering
\includegraphics[width=0.5\linewidth]{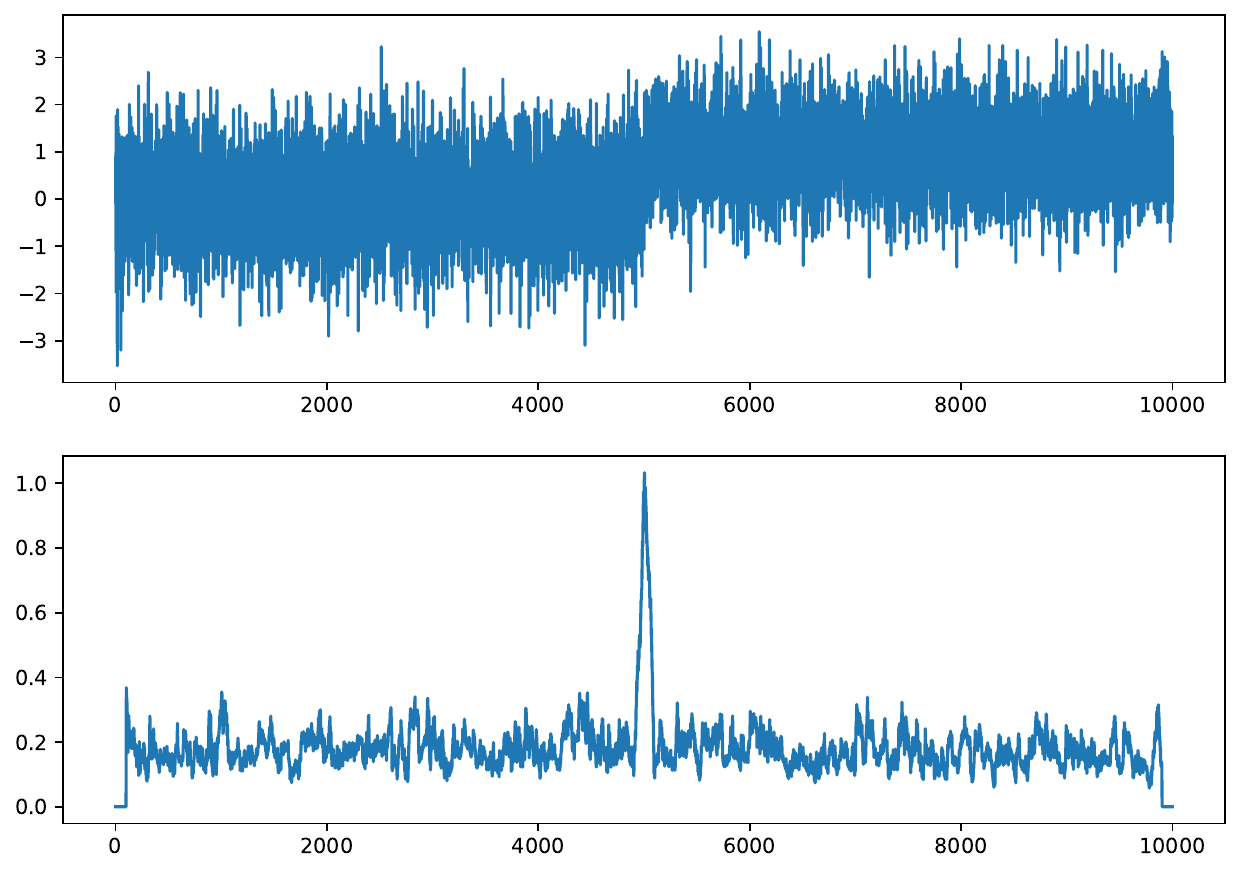}
\caption{The simplest possible instance of a change point: an abrupt change in the law occurs at time t=5000. The change in the law of the process is reflected in the Wasserstein metric derivative. Top: an artificial time series, with first 5000 samples drawn from a \(\mathcal{N}(0, 0.85)\) distribution and the second 5000 samples drawn from a \(\mathcal{N}(1, 0.75)\) distribution. Bottom: approximated metric Wasserstein derivative of the timeseries, with extremum appearing at \(t=5000\), corresponding to the abrupt change in the distribution.}
\end{figure}

\subsection{Contributions}
In this work, we present a non-parametric change-point detection algorithm, and combine it with an unsupervised clustering approach to identify metastable states in time series data. We find that in idealized synthetic data sets, molecular dynamics simulation data, and underwater acoustics data that our approach is able to identify coherent clusters in the space of probability distributions, thereby providing a compact model of the data. Our CPD approach scales linearly in the size of the data, and is thus well-adapted to large-scale data sets, and, provided that the data does not oscillate between states too rapidly, has the advantage of greatly reducing the number of pairwise distances that need to be computed for the cluster identification. 

\subsection{Organization of the paper}
In Section 2, we outline the necessary background to understand both the change point detection method and the clustering algorithms employed. In Section 3, we describe the algorithm we used to obtain our numerical results. In Section 4, we present the numerical results of our experiments, demonstrating efficacy of our approach in a variety easily understood low-dimensional examples. In Section 5, we conclude with a discussion of our results, and propose some future directions for expanding and extending the work presented herein.

\section{Background}
\subsection{Previous Work on Moving Window Change Point Detection and Relation to the Literature}
CPD encompasses a vast collection of methods from the field of applied statistics and for a comprehensive overview of the field, we refer the reader to the  surveys \cite{aminikhanghahiSurveyMethodsTime2017, reevesReviewComparisonChangepoint2007, niuMultipleChangePointDetection2016, truongSelectiveReviewOffline2020} as well as the textbook \cite{BassevilleNikiforovBook}. Generically speaking, CPD methods have several qualities which are used to categorize them, which we outline below. 

CPD methods may be online or offline. Online methods are those which run concurrently with the process about which they are trying to make inferences, whereas offline methods take an after-the-fact approach to the problem of identifying changes in the trajectory, examining all the data at once \cite{pageCONTINUOUSINSPECTIONSCHEMES1954,picardTestingEstimatingChangepoints1985}.  Celebrated online approaches include \cite{adamsBayesianOnlineChangepoint2007, jacksonAlgorithmOptimalPartitioning2005, OnlineAlgorithmSegmenting}, and have seen a rise in popularity as hardware improvements make real-time change detection more feasible. Our method is an offline approach, because we use statistical quantities based on the entire trajectory, retrospectively segmenting the time series and then looking for patterns in the data.

Another distinction to make when differentiating CPD methods is whether or not they are driven by some parametric assumption on the underlying data. Parametric approaches to the CPD problem are among some of the earliest, dating back to the Wald's sequential probability ratio test \cite{waldSequentialTestsStatistical1945}. Parametric and non-parametric approaches offer distinct advantages and trade-offs; the former work by making an explicit assumption on the underlying nature of the distribution (e.g., a process is stationary and Gaussian), while models in the latter category make no such assumptions. 

Parametric models may incur higher computational costs during the process of optimizing model selection. For example, SIMPLE \cite{IdentifyingLocalizedChanges}, one of the algorithms that served as initial inspiration for our work, is a parametric CPD scheme which optimizes a loss function which must be derived on some statistical prior, e.g. the data is drawn from a family of distributions which is Gaussian, Laplace, exponential, and so on. Non-parametric approaches by contrast avoid model selection issues, and but tend to require access to larger amounts of data at inference time \cite{aminikhanghahiSurveyMethodsTime2017}. Tools previously used in non-parametric CPD include but are not limited to the Kolmogorov-Smirnov test \cite{picardTestingEstimatingChangepoints1985, padillaOptimalNonparametricChange2019, patistOptimalWindowChange2007a}, graphs \cite{chenGraphbasedChangepointDetection2015, chenGraphBasedChangePointAnalysis2023}, and, of particular interest to us, optimal transport, where approaches based on distribution-free methods \cite{chengOptimalTransportBasedModels2023, chengMatchedFilteringStatistical2020, faberWATCHWassersteinChange2021}, kernel-based methods \cite{wangTwoSampleTestKernel2022}, and entropic regularization of the Wasserstein distance have been studied  \cite{werenskiRankEnergyStatistics2024}. Our approach is similar to the one that appears in \cite{chengOptimalTransportBased2020, chengOptimalTransportBasedModels2023}, with some key differences --- in particular, our use of the metric derivative rather than the Wasserstein two-sample test.  The Wasserstein two-sample test \cite{delbarrioTestsGoodnessFit1999, ramdasWassersteinTwoSampleTesting2017} is a distribution free non-parametric test which is applicable to 1D distributions on the real line and measures whether or not the monotone rearrangement of one distribution into another differs from the identify function in a statistically significant way; if there is no such significant difference, the null hypothesis (both samples are drawn from the same distribution) is accepted. This differs our approach, which we will describe below, in that we work directly with the Wasserstein distances between neighboring distributions and look for outliers in that data.  Additionally, we are interested in data sets which take values in a non-Euclidean domains, such as the \(3\)-torus. Recently Wasserstein two-sample tests have been studied on the flat torus \cite{gonzalez-delgadoTwosampleGoodnessoffitTests2023}, and this may provide an avenue for future extension of our work.

As a last categorical distinction among approaches, CPD methods are either supervised or unsupervised. The regime of the former is one in which change points are labeled ahead of time, and the goal of training is to develop a model which predicts well labels excluded from the training data. On the other hand, unsupervised CPD aims to identify structures in the data without knowledge of a ground truth --- this is the regime under which our method operates.  One common iteration of the unsupervised approach is to cluster the data into a family of states, and look for the points where the cluster label changes. Classic examples of this approach include the sliding window and bottom-up (SWAB) algorithm \cite{keoghOnlineAlgorithmSegmenting2001} and the minimum description length (MDL) algorithm \cite{rakthanmanonTimeSeriesEpenthesis2011}. Our approach reverses this paradigm, and attempts to identify suitable labels for the data based on the identified change points.

One challenge of moving window schemes, and which is acute to our approach in particular, is the choice of the window size \(w\). Being an essentially statistical estimator of the instantaneous distribution, it is naturally preferable for our windows to have a large number of samples, and thus be better estimators of the true distribution. On the other hand, as a practical matter, it isn't hard to take \(w\) too large --- supposing for a moment that there were some set of true change points, if the average (true) segment length is much smaller than the window size, then a typical window will contain many segments, and thus our approach will have no hope of detecting the change points in the large \(w\) regime. Therefore, our algorithm leaves \(w\) as a tunable parameter, which the user can adjust as fits their use case.

The primary predecessors which inspired our work are the SIMPLE and BarT algorithms, both of which were developed for specific application to molecular dynamics trajectory data \cite{damjanovicModelingChangesMolecular2023}, with the latter being an extension of the former based on optimal transport and the Wasserstein distance. SIMPLE works for a process \(\{\{Y_{j,t}\}_{t=1}^T\}_{j=1}^J\)in \(J\) dimensions by first choosing a log-likelihood function \(\hat{l}\) representing the plausibility of observing the given data conditioning on a set of change points \(\{\tau\}_{i=1}^{K}\), and proceeds by minimizing by means of a dynamic programming algorithm 
\begin{equation*}
    \sum\limits_{j=1}^J\sum\limits_{i=0}^K \hat{l}(Y_{j,\tau_{j,i}+1}, \dots, Y_{j,\tau_{j,i+1}}) - \lambda\sum\limits_{i=1}^K q(S_i)
\end{equation*}
where \(S_i\) is a subset of \(\{1, \dots, J\}\) corresponding to the dimensions which are changing at time \(\tau_i\), and \(q:2^{\{1,\dots,J\}}\to\R\) is a strictly increasing penalty function satisfying \(q(A \cup B) \le q(A) + q(B)\) whenever \(A\) and \(B\) are non-empty disjoint subsets, e.g. in the original implementation of SIMPLE, the authors take \(q(\cdot) = |\cdot|^\alpha\) for \(\alpha \in (0,1]\). This encourages the algorithm to choose change points where many dimensions are changing simultaneously --- any change point whatsoever incurs a penalty, but points which exhibit changes only in a few dimensions of the data set are penalized more heavily.

BarT takes a point of view based on optimal transport (see Section \ref{otsec} for details on optimal transport), adapting the loss function \(\hat{l}\) to consider the possibility that a point belongs to a barycentric transition between metastable states. This adaptation is made by taking \(\hat{l}(\cdot) = \max\{L_{ms}(\cdot), L_{tr}(\cdot)\)\}, where \(L_{ms}\) and \(L_{tr}\) are, respectively, the log-likelihood functions corresponding to a metastable and a transition region. Explicitly, the algorithm supposes that metastable states are Laplace distributed, and takes \((\mu_j, \sigma_j)\) to be the parameters of the \(j\)-th segment \(\{X_t\}_{t={\tau_j}}^{\tau_{j+1}}\). Then one has
\begin{equation*}
    L_{ms}(\{X_t\}_{t={\tau_j}}^{\tau_{j+1}}) = \log\prod\limits_{t={\tau_j}}^{\tau_{j+1}} \frac{1}{2\sigma_j}\exp\left(-\frac{|X_t - \mu_j|}{\sigma_j}\right), \quad L_{tr}(\{X_t\}_{t={\tau_j}}^{\tau_{j+1}}) = \log\prod\limits_{t={\tau_j}}^{\tau_{j+1}} \frac{1}{2\sigma_{j,t}}\exp\left(-\frac{|X_t - \mu_{j,t}|}{\sigma_{j,t}}\right),
\end{equation*}
wherein \((\mu_{j,t}, \sigma_{j,t})\) correspond to the parameters of the Wasserstein geodesic between the \((j-1)\)-th and \((j+1)\)-th segments. These parameters are easily computed because the Wasserstein geodesic of two Laplace distributions \(\gamma_1\) and \(\gamma_2\) is itself Laplace, and the parameters of the (unit time length) interpolant are directly estimable from the estimated parameters of \(\gamma_1\) and \(\gamma_2\) \cite{damjanovicModelingChangesMolecular2023}. 
Our work is inspired by this transport-driven approach, but differs essentially in that it forsakes an explicit loss function, which is expensive to optimize, in favor of working with the geometric structure that the Wasserstein metric gives to the set of empirical distributions. 

The problem of change point detection and segment clustering for time series data was also considered in \cite{chengOptimalTransportBased2020}, although our work takes a somewhat different approach to the problem for 1D timeseries, and further expands the approach to multi-dimensional non-Euclidean timeseries data, both those taking on vector values and those satisfying a manifold hypothesis e.g., we will apply our methods to molecular dynamics simulation data which can be represented as a timeseries on the torus.

\subsection{Optimal Transport as a Geometry on Probability Distributions}\label{otsec}

Optimal transport is broad field, and for a comprehensive overview of the subject we refer the reader to any of \cite{santambrogioOptimalTransportApplied2015, ambrosioLecturesOptimalTransport2021, villaniOptimalTransport2009, peyreComputationalOptimalTransport2020}. What follows is an abbreviated summary of the most necessary details.  In the sequel, we follow the presentation found in \cite{santambrogioOptimalTransportApplied2015}.

Let \((\Omega, d)\) be a complete, separable metric space; typically, one takes \(\Omega = \R^d\) with the standard Euclidean norm. Let \(\mathcal{P}_2(\Omega)\) be the space of finite-variance probability measures on \(\Omega\), let \(\mu, \nu \in \mathcal{P}_2(\Omega)\) with \(c : \Omega \times \Omega \to \mathbb{R}\), and finally let \(\Pi(\mu,\nu)\) be the set of probability distributions on \(\Omega\times\Omega\) with marginals \(\mu\) and \(\nu\). Then the \textit{optimal transport cost} for \(\mu\) and \(\nu\) with cost \(c\) is given by 

\begin{equation*}
    OT(\mu,\nu) := \inf\left\{\int_{\Omega\times\Omega} c(x,y) \: \diff \gamma(x,y) \: : \: \gamma \in \Pi(\mu, \nu)\right\}.
\end{equation*}If one takes \(c(x,y) = d^2(x,y)\), then the square root of the optimal transport cost defines a metric on \(\mathcal{P}_2(\Omega)\), which is known as the \(2\)-Wasserstein metric, and is denoted by \(W_2\). Furthermore, it is in this setting that one has the formal interpretation of \((\mathcal{P}_2(\Omega), W_2)\) as a Riemannian manifold \cite{jordanVariationalFormulationFokker1998,benamouComputationalFluidMechanics2000}. The manifold interpretation of the space of probability distributions endows the space with a dynamics, which is essential to the motivation of our approach.
In the case where measures are supported on the real line and absolutely continuous with respect to the Lebesgue measure, the following convenient identity holds:

\begin{equation}
\label{ot1d}
    W_2^2(\mu, \nu) = \int_0^1 |F_\mu^-(x) - F_\nu^-(x)|^2 \: \diff x,
\end{equation}where \(F_\mu^{-1}\) is the generalized inverse cumulative distribution function, defined by

\begin{equation*}
    F_\mu^{-1}(x) = \inf \{t : \mu([-\infty, t) \ge x\}.
\end{equation*}

This reason this formula is so convenient when working with one-dimensional empirical measures is that it reduces the problem of computing an optimal transport cost to the problem of sorting two lists, which is a much easier problem than solving a general linear program.

While a notion of ``gradient'' exists for the Wasserstein geometry \cite{santambrogioOptimalTransportApplied2015}, for our purposes it will suffice to work with the Wasserstein \textit{metric derivative} \cite{ambrosioGradientFlowsMetric2005}, defined as follows: let \(\gamma:[0,1]\to\mathcal{P}_2(\Omega)\), let \(t_0 \in (0,1)\). We say that \(\gamma\) has metric derivative \(|\dot\gamma|\) at \(t_0\) if the following limit exists:

\begin{equation}
\label{metric_deriv}
    |\dot\gamma| = \lim\limits_{t\to t_0}\frac{W_2(\gamma(t), \gamma(t_0))}{t - t_0}.
\end{equation}

Metric derivatives may be defined in any metric space for which a notion of absolute continuity is available.  In \cite{ambrosioMetricSpaceValued1990}, it was shown that the limit in \eqref{metric_deriv} exists under very general conditions on the measures \(\gamma(t)\), namely that \(\gamma(t)\) be absolutely continuous in time with respect to the Lebesgue measure on \([0,1]\) for almost-all \(t\). 

The intuition for the metric derivative is that it represents the instantaneous speed of the law of the process in the space of probability distributions. The metric derivative is the starting point for the much-celebrated theory of gradient flows on metric spaces \cite{ambrosioGradientFlowsMetric2005}, which could in the future serve as a good basis from which to expand this work.
\begin{figure}
    \centering
    \includegraphics[]{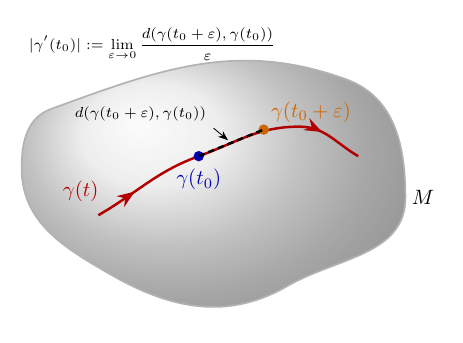}
    \caption{A curve on a manifold; the metric derivative is defined in terms of a limit of a difference quotient where the numerator is given by the difference between nearby points.}
\end{figure}

The Wasserstein distance metrizes the space of probability distributions in a way which encodes the geometry of the underlying space, making it an apt choice for the analysis of physical trajectories, and is amenable to working with Dirac clouds as estimates of the relevant measures. This allows us to quantify the difference between two neighboring windows of points and approximate the metric derivative of the trajectory in Wasserstein space. This information allows us to predict whether or not the law of the process is metastable or in the midst of transition between metastable states.

At a high level, what is essential to our work is the following. Let \(\{X_t\}_{t=0}^T\) be a stochastic process defined on some metric space \(\Omega\); in practice we consider our data as being a realization of the process \cite{oksendalStochasticDifferentialEquations2010}, and from the given data we will try to draw inferences about the particular realization. For each \(t \in [0, T]\), we can denote by \(\gamma_t\) the distribution of the random variable \(X_t\). We will assume that this parameterized family of measures lives in \(\mathcal{P}_2(\Omega)\)---the set of probability measures on \(\Omega\) with finite second moment ---then we can consider \(\mathcal{P}_2(\Omega)\) as an infinite-dimensional Riemannian manifold, at least in a formal sense by interpreting the Wasserstein metric as coming from a Riemannian structure. If the law of our stochastic process \(X_t\) evolves in a continuous way, then we can consider the parameterized family of measures \(\gamma_t\) to be a curve, i.e. a particle trajectory, in this manifold. If this particle trajectory is concentrated in a small region of the manifold for some period of time, then the stochastic process \(X_t\) is in a \textit{metastable state}. If the particle is moving between metastable states it is in a \textit{transition state}. \textit{A priori}, we generally do not know what the metastable states of a process will be --- our goal is thus to leverage the metric structure of \(\mathcal{P}_2(\Omega)\), given by the \(2\)-Wasserstein distance, to identify stretches of time (hereafter, ``segments'') when the particle is metastable. Then, using classical clustering methods, we will produce a reduced-order description of the trajectory by grouping together empirically observed metastable segments. 

\subsection{Clustering Techniques Based on Similarity Matrices}
Once we have split a trajectory into a sequence of segments, our next task is to group them together in a coherent way. In other words, we want to identify clusters in the reduced data set. Each segment is represented by an empirical distribution, and continuing with the theme established by our CPD method, we will quantify the difference between segments with the \(2\)-Wasserstein metric. Because each element in the Wasserstein space is a measure, there is a restriction on the family of clustering methods which are suited to the task of labeling the segments --- for example, while a Wasserstein based \(K\)-means approach has been investigated in the literature, theoretical results on its scaling remain in development and empirical work suggests the scaling may be poor \cite{zhuangWasserstein$K$meansClustering2022}. To deal with this difficulty, we will employ a clustering technique which works just from the pairwise distances of the segment data. Conversely, working with the pairwise distance matrix further highlights one of the reasons why it might be desirable to segment a time series. \textit{In toto}, our data sets may contain millions of individual data points, making the computation of the pairwise distance matrix prohibitive at the level of points --- however, if the data can be broken into a number of segments which is much smaller than the number of raw data points, that computational burden is eased.

\begin{figure}
    \centering
    \includegraphics[width=0.9\linewidth]{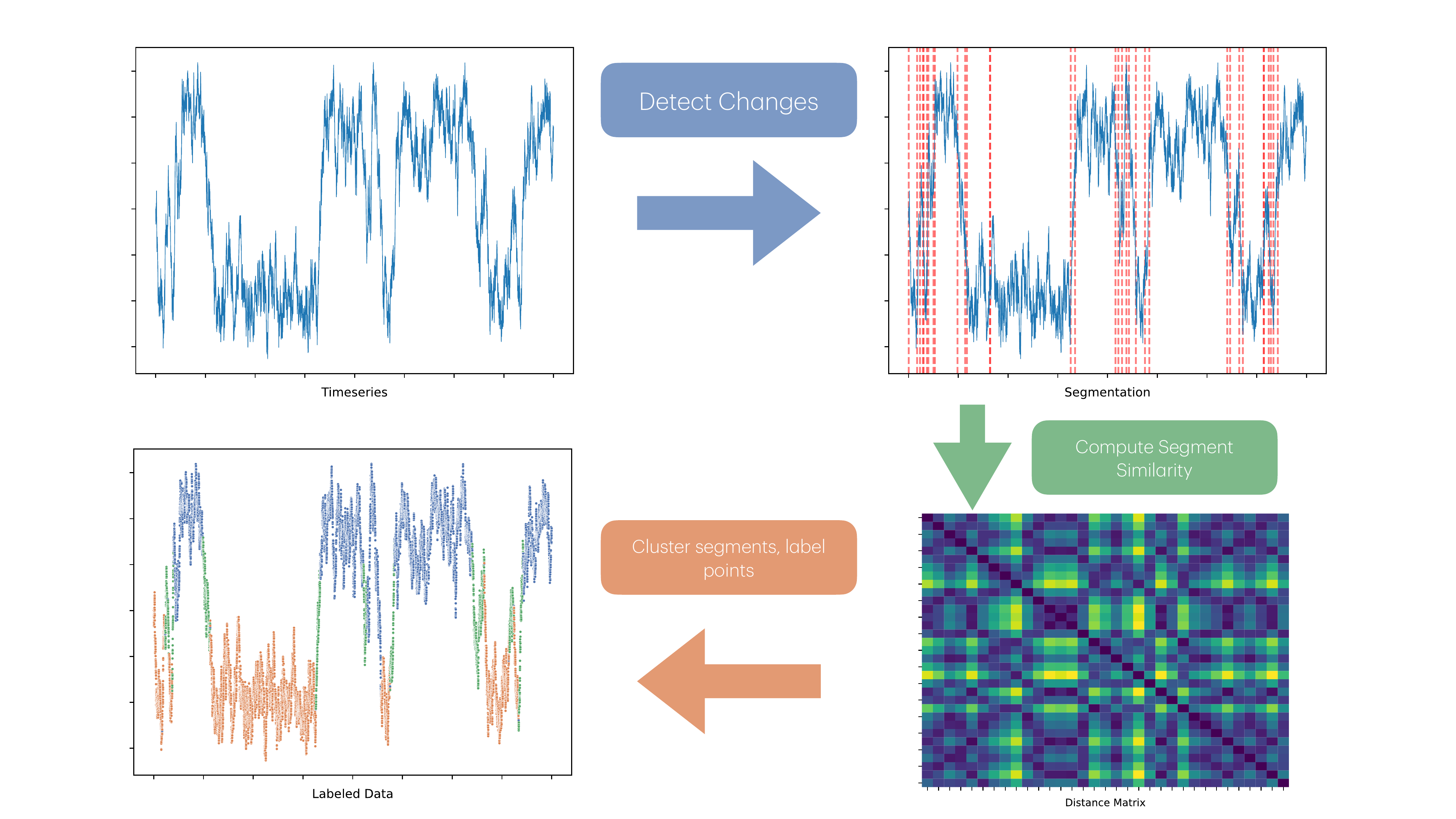}
    \caption{Pipeline of the procedure: first, the time series is split into a family of non-overlapping segments, then a pairwise similarity matrix is computed based on the Wasserstein distance, and finally a clustering algorithm is applied to the similarity matrix to learn labels for the segments and by extension the points of the timeseries.}
\end{figure}

Letting \(S\) be the collection of segments in the time series and \(\hat{\nu_i}\) be the empirical measure associated to each segment, we form the similarity matrix via the rule

\begin{equation*}
    A = \left[\exp\left(-\frac{1}{2\sigma}W_2^2(\hat{\nu_i}, \hat{\nu_j})\right)\right]_{i,j=1}^{|S|}
\end{equation*}
and thus just the formation of the similarity matrix requires \(\mathcal{O}(|S|^2)\) operations.  In this work, we always take \(\sigma=1\). 

%Spectral clustering works by interpreting the similarity matrix as a weighted adjacency matrix for a graph and analyzing the spectrum of the resulting graph Laplacian\cite{stankovicGraphSignalProcessing2019a}. Generally speaking, one of the advantages of spectral clustering is that it is able to identify non-convex clusters --- in other words, if some portion of the data lives on a submanifold of the ambient embedding space which is non-convex with respect to the ambient geometry, spectral clustering can succeed in grouping that portion together, whereas other centroid-based clustering schemes, such as \(K\)-means, will fail to do so.  

Clustering by advanced density peaks is a method that aims to identify the topography of a multimodal dataset \cite{rodriguezClusteringFastSearch2014, derricoAutomaticTopographyHighdimensional2021}. This scheme is driven by several heuristics, and has the advantage of being entirely unsupervised. Firstly, ``putative centers" of the data set are identified by looking for those points such that the local density of points around the centers is maximal. Once a family of centers is identified, this induces a Voronoi partitioning of the ambient space. The borders of this partition are then examined to identify saddle points in the topography of the data set, which in turn allows for the identification of peaks (modes) in the data. All of these steps are driven via statistical methods, i.e. suitable estimators on quantities such as local density. The two main advantages of using ADPC in our setting is that it is both dimension agnostic, in the sense that it will attempt to estimate the intrinsic dimension of the collection of segments implied by the distance matrix on its own, and that it does not require as an input a guess on the number of clusters present in the data. This makes our pipeline amenable to complicated trajectories which cannot be ``eye-balled'', either because the data is essentially high-dimensional or because the modes of the clusters are too close together to be easily differentiated by human judgment.

\section{Algorithms}

It is well-known that in the case of 1D empirical measures on the real line, the optimal transport cost is straightforward to compute, and equivalent to sorting two lists. Following \cite{peyreComputationalOptimalTransport2020}, for two empirical measures \(\hat\mu = \frac{1}{n}\sum\limits_{i=1}^n\delta_{x_i}\) and \(\hat\nu=\frac{1}{n}\sum\limits_{i=1}^n \delta_{y_i}\), which we assume to be sorted so that \(x_i \le x_{i+1}\), \(y_i\le y_{i+1}\) for each \(i\), we have that

\begin{equation*}
    W_2^2(\hat\mu,\hat\nu) = \sum\limits_{i=1}^n |x_i - y_i|^2.
\end{equation*}

The case when the measures have a different number of atoms is not meaningfully more difficult using \eqref{ot1d}, since the psuedo-inverse of an empirical CDF is easily obtained by sorting the observed samples. In our case, we will also be interested in working with 1D transport costs on the unit circle, details for which can be found in \cite{delonFastTransportOptimization2010}. All transport algorithms used for this work are those implemented in \cite{flamaryPOTPythonOptimal2021}.

Beyond the computational aspects of 1D transport costs, our CPD also depends upon two hyperparameters, \(w\) and \(q\). The window size \(w\) specifies the sample size of the empirical distributions we use at each point in time to approximate the law of the process, and \(q\) is a real number between 0 and 1 that specifies a quantile cutoff with respect to the distribution of approximated Wasserstein metric derivatives, indicating that a data point may potentially be labeled as a change point.  In essence, \(w\) controls the number of samples we demand at any given time to approximate the instantaneous change in distribution of the time series, while \(q\) controls how sensitive the algorithm is to the changes in the time series. For example, if \(q\) is taken to be small, then more points will register as candidates, and conversely if \(q\) is taken close to 1, few candidates will be identified.

Our approach is this: for a fixed data set \(\left\{X_t\right\}_{t=1}^T\) and parameters \(w, q\) we approximate the metric Wasserstein derivative via

\begin{equation*}
    \left|\hat{\dot{\gamma}}\right| = W_2(\mu_t^-, \mu_t^+),
\end{equation*}
where \(\mu_t^- = \left\{X_t\right\}_{t-w}^t\) \(\mu_t^+ = \left\{X_t\right\}_{t}^{t+w}\). This approximation gives us a sense of the speed at which the law of the process is moving through the space of probability distributions. Then, with that information in hand, we look for the regions where we see outlier acceleration of the distribution. 

To give a better sense of what exactly it is that we are looking for, consider the ideal scenario in a physical sense: suppose that we are looking at a particle trajectory in the plane over a time interval \([0,1]\). At some time \(t_0\), the particle begins to move uniformly from its initial position to its terminal position, which it reaches at time \(t_1\), after which it ceases to move. The speed of the particle would then be graphically represented by a square wave, i.e, the characteristic function \(\mathbb{1}_{[t_0,t_1)}(t)\). 

\begin{algorithm}
\label{CPDAlg}
\caption{Change Point Detection Algorithm (ComputeChangePoints)}
\begin{algorithmic}[1]
\Require Time series data $X_t$, total number of data points $T$, window size $w$, quantile threshold $q$
\Ensure Set of change point times

\State \textbf{Step 1:} \textit{Compute derivative magnitude array}
\State Initialize $\left|\hat{\dot{\gamma}}\right| \gets $ empty array of size $(T-2w)$
\For{$t = w$ \textbf{to} $T-w$}
    \State Compute $|\hat{\dot{\gamma}}|[t] \gets $ \text{estimate of } $|\dot{\gamma}(t)|$ from $X_t$
\EndFor

\State
\State \textbf{Step 2:} \textit{Identify candidate times based on quantile threshold}
\State Compute $\tau_q \gets$ $q$-th quantile of $|\hat{\dot{\gamma}}|$
\State $\mathcal{C} \gets \{t : |\hat{\dot{\gamma}}|[t] \geq \tau_q, \, t=w, w+1, \dots, T-w\}$ \Comment{Candidate set}

\State
\State \textbf{Step 3:} \textit{Filter for inflection points in contiguous segments}
\State Partition $\mathcal{C}$ into contiguous subsequences $\mathcal{S}_1, \mathcal{S}_2, \ldots, \mathcal{S}_k$
\State Initialize $\mathcal{I} \gets \emptyset$ \Comment{Inflection point set}
\For{each segment $\mathcal{S}_i$}
    \State Identify inflection points within segment $\mathcal{S}_i$
    \State Add detected inflection points to $\mathcal{I}$
\EndFor

\State \Return $\mathcal{I}$
\end{algorithmic}
\end{algorithm}

Implicit in our overall change point detection scheme is a filtering choice: \textit{a priori}, if we set a quantile cut off \(q\), then simply returning the points above that threshold will return \((1-q)\%\) of the data set as change point candidates, and many of these candidate points will be contiguous with each other, as a consequence of the continuity of the trajectory. In other words, we will get out of the first two steps of the scheme a collection of change \textit{regions}, from which we now want to extract a fine-grained selection of change \textit{points}. 

Finally, we extend our approach to multi-dimensional data simply by working component-wise. By this we mean that we treat each each component of the time series as being a process all its own, and perform for each dimension the segmentation and clustering procedure. Formally, we can consider our pipeline as a transformation \(F=L\circ S\) where \(S\) maps the input time series \(\{\vec{X}_t\} \subset \R^D\) to a corresponding set of segments \(\mathcal{S} = \{S_i\}_{i=1}^k\), and where \(L: S_i \mapsto l_i\) maps segments to labels, so that  \(F: \{\{X_{t,d}\}_{t=1}^T\}_{d=1}^D \mapsto \{\{l_{t,d}\}_{t=1}^T\}_{d=1}^D\). Then an overall state is assigned to \(\vec{X}_{t_0}\) by concatenating the labels \((l_{t_0,1}l_{t_0, 2}\dots l_{t_0,d})\).

\begin{algorithm}
\label{IPIDAlg}
\caption{Change Point Detection Subroutine: Identifying Inflection Points}
\begin{algorithmic}[2]
\Require Metric derivative data $\{D_{t}\}_{t=1}^T$, quantile threshold $q$
\Ensure Inflection points of metric derivative data

\State \textbf{Step 1: } \textit{Identify continuous subsequences in \(|\hat{\dot{\gamma}}|\)}

\State \(\mathcal{C} \gets \{t : D_t > \textbf{quantile}(D,q)\}\)
\State \(\mathcal{S} \gets \{\{D_{t}\}_{t=t_i}^{t_{i + 1}} : \{t_i, t_{i}+1, \dots, t_{i} + k\}\subset \mathcal{C} \text{ is a maximum length subsequence}\}\)

\State \textbf{Step 2: } \textit{Report extrema of the subsequences}
\State \(\mathcal{I} \gets \{\}\)
\For{each subsequence \(S \in \mathcal{S}\)}
    %\State Add \(\argmin S_i, \argmax S_i\) to \(\mathcal{I}\)
    \State \(\mathcal{I} \gets \mathcal{I} \cup \{\argmin S, \argmax S\}\)
\EndFor

\State \Return \(\mathcal{I}\)
\end{algorithmic}
\end{algorithm}

\begin{algorithm}
\label{GeneralStepAlg}
\caption{Metastable State Identification Procedure (IdentifyStates)}
\begin{algorithmic}[3]
\Require time series data \(\{\{X_{t,i}\}_{t=1}^{T}\}_{i=1}^d\), window size $w \in \mathbb{N}$, quantile threshold $q \in (0,1)$, clustering method Cluster
\Ensure Point labels \(\{l_t\}_{t=1}^{T}\)
\For{\(i=1:D\)}
\State \textbf{Step 1: } \textit{Identify segmentation of time series}
\State \(\mathcal{C} \gets \text{ComputeChangePoints}(X_t, w, q)\)\Comment{Identify change points}
\State \(\mathcal{S} \gets \{\{X_t\}_{t=t_i}^{t_{i+1}} : t_i \in \mathcal{C}\}\)\Comment{Partition data into segments}
\State \textbf{Step 2: } \textit{Compute pairwise \(W_2\) similarity of segments}
\State \(D \gets \left[\exp\left(-\frac{1}{2}W_2^2(S_i, S_j)\right)\text{ for } S_i, S_j \in \mathcal{S}\right ]\) \Comment{Compute similarity matrix}
\State \textbf{Step 3: } \textit{Cluster segments based on similarity matrix}
\State \(L \gets \text{Cluster}(D)\)
\State \textbf{Step 4: } \textit{Label points according to the label of their parent segment}
\State \(l_i \gets [L(S(X_{t,i})) \text{ for } t=1:T]\)
\EndFor
\State \(\text{PointLabels}\gets [l_1(t)l_2(t)\dots l_d(t) \text{ for } t=1:T]\) \Comment{Concatenate labels to make final assignment}
\State \Return PointLabels
\end{algorithmic}
\end{algorithm}

We summarize the above with the following proposition:
\begin{proposition}
For fixed \(w\) and \(q\), for inputs \(\{\vec{X_{t}}\}_{t=1}^T \subset \R^D\), the runtime complexity of Algorithm 1 is \(\mathcal{O}(TD).\)
{Fix \(w \in \{1, \dots \lfloor T/2\rfloor\}, q \in (0,1)\)}, and assume that the number of segments \(N\) scales proportionally to the length of the input data \(X = \{X_t\}_{t=1}^T\).
\end{proposition}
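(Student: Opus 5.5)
The plan is a step-by-step cost count of Algorithm 1 applied to one coordinate, $\{X_{t,d}\}_{t=1}^T$. Because the multi-dimensional pipeline treats each coordinate separately, summing over the $D$ coordinates then gives $\mathcal{O}(TD)$. Throughout, $w$ and $q$ are fixed, so any factor that depends only on $w$ (for example $w\log w$) counts as a constant.

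\textbf{Step 1 of Algorithm 1.} There are $T-2w+1 \le T$ window positions. At each $t$ we form the two empirical measures $\mu_t^-$ and $\mu_t^+$, with $w$ atoms each. Using the one-dimensional identity \eqref{ot1d}, $W_2$ between them costs one sort of each list plus one $\mathcal{O}(w)$ pass, i.e. $\mathcal{O}(w\log w)$. The circular case of \cite{delonFastTransportOptimization2010} has the same order up to a logarithmic factor in $w$. Since $w$ is fixed, each evaluation is $\mathcal{O}(1)$ and Step 1 is $\mathcal{O}(T)$. One could maintain sorted windows incrementally, but that is unnecessary for the stated bound.

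\textbf{Steps 2 and 3.} The $q$-quantile of an array of length $\le T$ can be found in $\mathcal{O}(T)$ by linear-time selection (median of medians); sorting would give only $\mathcal{O}(T\log T)$, so this choice matters. Building the candidate set $\mathcal{C}$ is one thresholding pass, $\mathcal{O}(T)$. Splitting $\mathcal{C}$ into maximal contiguous runs is another single scan. Following Algorithm 2, the "inflection points" of each run are its argmin and argmax, found in time linear in the run length. The runs are disjoint, so their total length is at most $T$, and Step 3 is $\mathcal{O}(T)$.

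\textbf{Summing.} One coordinate costs $\mathcal{O}(T)$, and $D$ coordinates cost $\mathcal{O}(TD)$. The assumption that the number of segments $N$ grows proportionally to $T$ is not needed for Algorithm 1 itself. I would mention that it only enters if one also counts the segment clustering in Algorithm 3, where the $\mathcal{O}(N^2)$ similarity matrix sits outside the proposition's claim.

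\textbf{Main obstacle.} The only delicate point is the quantile step: the bound is linear only if a linear-time selection algorithm is used. A secondary point is being explicit that the per-window transport cost depends only on $w$, so it is treated as a constant once $w$ is fixed.
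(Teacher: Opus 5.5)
Your argument is correct. Its first half matches the paper's proof exactly: an $\mathcal{O}(w\log w)=\mathcal{O}(1)$ sort-based $W_2$ evaluation at each of $\mathcal{O}(T)$ window positions, then a factor $D$ from the componentwise treatment.

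You diverge from the paper in the post-processing. The paper bounds the inflection-point subroutine by the cost of sorting each segment. It then uses the hypothesis $N\propto T$, amortizing over the average segment length $T/N$ to get $N\cdot\frac{T}{N}\log\frac{T}{N}=\mathcal{O}(T)$. It never mentions the cost of computing $\tau_q$. You instead note that, as specified in Algorithm 2, the subroutine only takes an argmin and argmax over disjoint runs of total length at most $T$. That is a single linear scan, so the hypothesis on $N$ is superfluous for Algorithm 1. You also make explicit that the quantile needs linear-time selection, since a full sort would already cost $\mathcal{O}(T\log T)$.

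Your route gives a genuine worst-case bound. Because $x\log x$ is convex, $N\cdot\frac{T}{N}\log\frac{T}{N}$ is a lower bound on $\sum_i n_i\log n_i$, not an upper bound. So the paper's amortization only covers roughly balanced segmentations: a single run of length $\Theta(T)$ costs $\Theta(T\log T)$ to sort even when $N\propto T$. What the paper's route buys is room for a more expensive, sort-based notion of ``inflection point'' than argmin/argmax. The price is the extra assumption and the balance caveat.
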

\begin{proof}
    The claim follows directly from the construction of the algorithm: for each \(t = w, \dots, T - w\), we perform two sorting operations, which are \(\mathcal{O}(w\log w)\), but \(\mathcal{O}(1)\) with respect to the size and dimension of the input data. Likewise, the subroutine identifying the inflection points of the identified segments is no more complex than sorting the empirical distribution of each segment. Since the number of segments is assumed to grow linearly in the number of data points, we amortize by considering the expected length of each segment, which will be \(T/N\). Performing \(N \propto T\) sorting jobs on a list of size \(T/N\) has complexity \(\mathcal{O}(N\frac{T}{N}\log(\frac{T}{N}) = \mathcal{O}(T)\). Because this procedure is carried out over each of the \(D\) dimensions of the data, the algorithm has total asymptotic time complexity \(\mathcal{O}(TD).\)
\end{proof}

\section{Methodology \& Numerical Results}
\label{Method and results}
For each of the particular time series we examine going forward, we will apply our method and examine the point labels so produced. To keep things visualizable, we will look at timeseries which takes values in \(\R\), and the \(3\)-torus. While almost all the data sets are computer generated, some are more artificial than others --- the ``toy'' data set, for example, was constructed explicitly to serve as a testing ground for our procedure \footnote{Our code and data are available at \url{https://github.com/dcgentile/mfpy}}. The molecular dynamics simulations were retrieved from the deeptime Python library \cite{hoffmannDeeptimePythonLibrary2021} and the GROMACS library for chemical simulations \cite{abrahamGROMACSHighPerformance2015}. Our collection of data sets includes one physically generated trajectory, courtesy of the Naval Research Laboratory \cite{turgutMeasuredDepthdependenceWaveguide2016}.

\subsection{Generated Data Sets}
\subsubsection{Toy Data Set}
\label{ToyDataSets}
Our first data set comes from a family of artificially constructed trajectories meant to give our procedure an idealized setting in which to work. We fix two means \(m_1, m_2 \in \R\) and a variance \(\sigma > 0\) to determine two separated Laplace distributions \(\mu_1, \mu_{2}\), and we fix a transition length \(L\), a total number of samples \(T\) and collection of change points \(\left\{ t_i \right\} \subset \left\{ 1\dots T \right\}\). The trajectory is then generated by sampling \(X_{t} \sim \mu_1\) for \(t = 0, \dots, t_1\), the Wasserstein interpolant of \(X_t \sim [\mu_1, \mu_2]_{t}\) for \(t = t_1 + 1, \dots, t_2\), and then \(X_t \sim \mu_2\) for \(t = t_2 + 1, \dots, t_3\), repeating the pattern until time \(T\). 

\begin{figure}[htbp]
    \centering
    
\begin{tabular}{ |p{3cm}|p{3cm}|p{3cm}|p{3cm}|  }
 \hline
 \multicolumn{4}{|c|}{Toy Laplacian Precision \& Recall} \\
 \hline
 \# True CPS & \# Detected CPS & Precision & Recall \\
 \hline
 38 & 38 & 0.89 & 0.89 \\
 \hline
\end{tabular}

\caption{Table of precision and recall scores for the toy data set. Precision and recall are averaged over tolerances of \(tol=0,\dots, 100\).}

\label{ToyFTable}
\end{figure}

We find that our procedure is able to identify the change points of the trajectory and accurately classify the resulting segments, and we summarize the output in relation to the ground truth data in table \ref{ToyFTable}. Because of the high variance in the timeseries, we do not expect our algorithm to output a set of change points which is identical to the ground truth, but rather a set where each candidate change point is close to a ground truth element. For this reason, we average the precision and recall scores over a range of tolerances.

%\begin{figure}[htbp]\label{toy-adpc}
%    \centering 
%    \begin{subfigure}[t]{0.49\textwidth}
%        \includegraphics[width=1\linewidth]{./figs/toy/toy-cps.pdf}
%        \caption{Detecting change points in an idealized setting. Top: we generate a synthetic data set by sampling from a fixed Laplace distribution with mean \(100\) and variance \(20\), and one with mean \(200\) and variance \(20\). We simulate transitions between the two states by sampling from the Wasserstein geodesic (McCann interpolant) between the two states. Bottom: change points are superimposed on the trajectory as red dashed lines. }
%    \end{subfigure}
%    \begin{subfigure}[t]{0.49\textwidth}
%        \centering
%        \includegraphics[width=1\linewidth]{./figs/toy/toy-adpc-clustering.pdf}
%        \caption{Segment cluster labels are learned in an unsupervised manner using the clustering by advanced density peaks algorithm. In this case, the clustering scheme identifies both metastable states and outputs a separate label for the transition regions.}
%    \end{subfigure}
%    \caption{Toy Laplace-based Trajectory Segmentation and Clustering}
%\end{figure}

\begin{figure}[htbp]\label{toy-adpc}
    \centering 
        \includegraphics[width=1\linewidth]{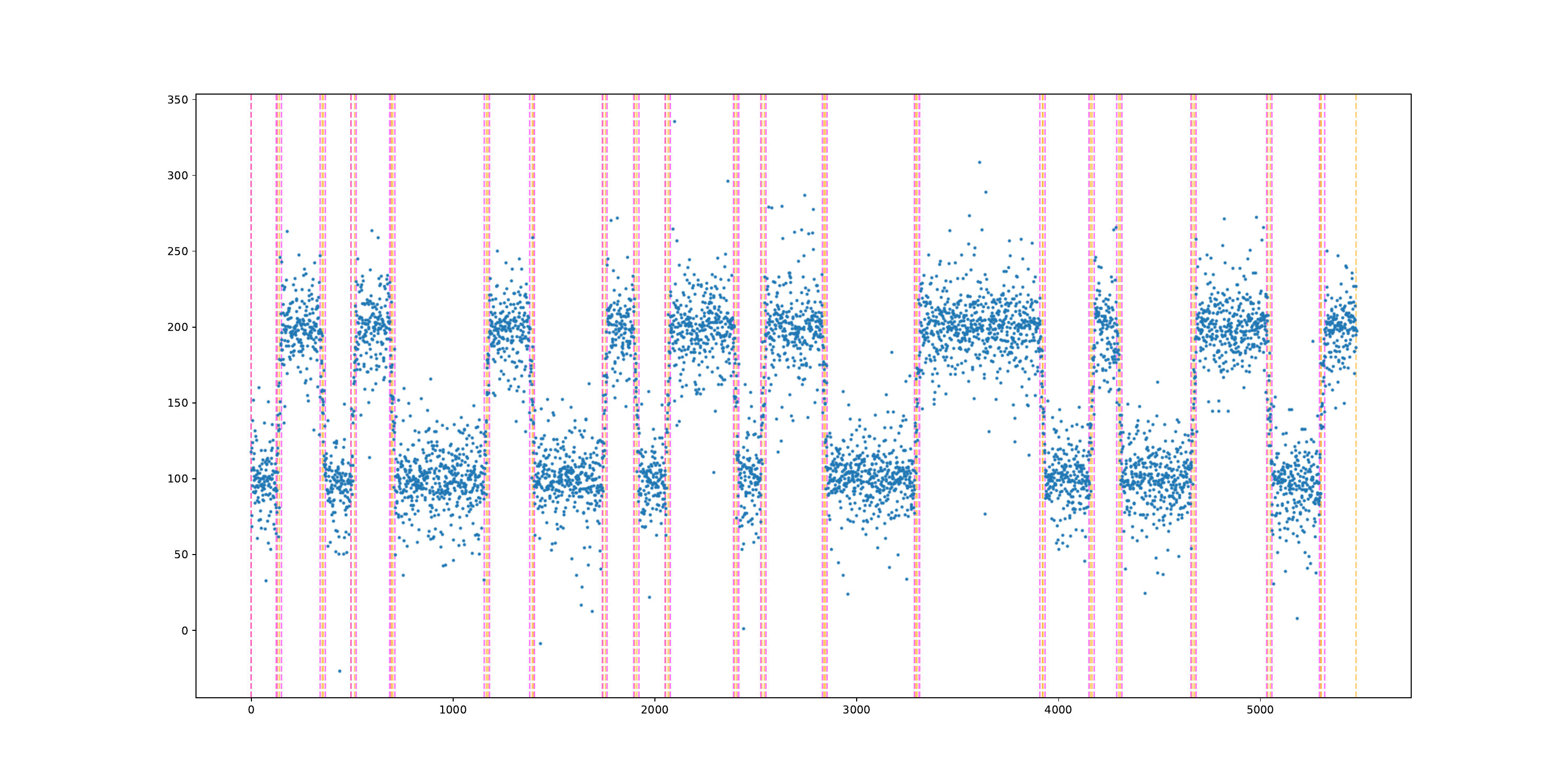}
        \caption{Detecting change points in an idealized setting. We generate a synthetic data set by sampling from a fixed Laplace distribution with mean \(100\) and variance \(20\), and one with mean \(200\) and variance \(20\) in an alternating pattern. We simulate transitions between the two states by sampling from the Wasserstein geodesic (McCann interpolant) between the two states; change points are superimposed on the trajectory as red dashed lines. }
\end{figure}
\begin{figure}[htbp]
        \centering
        \includegraphics[width=1\linewidth]{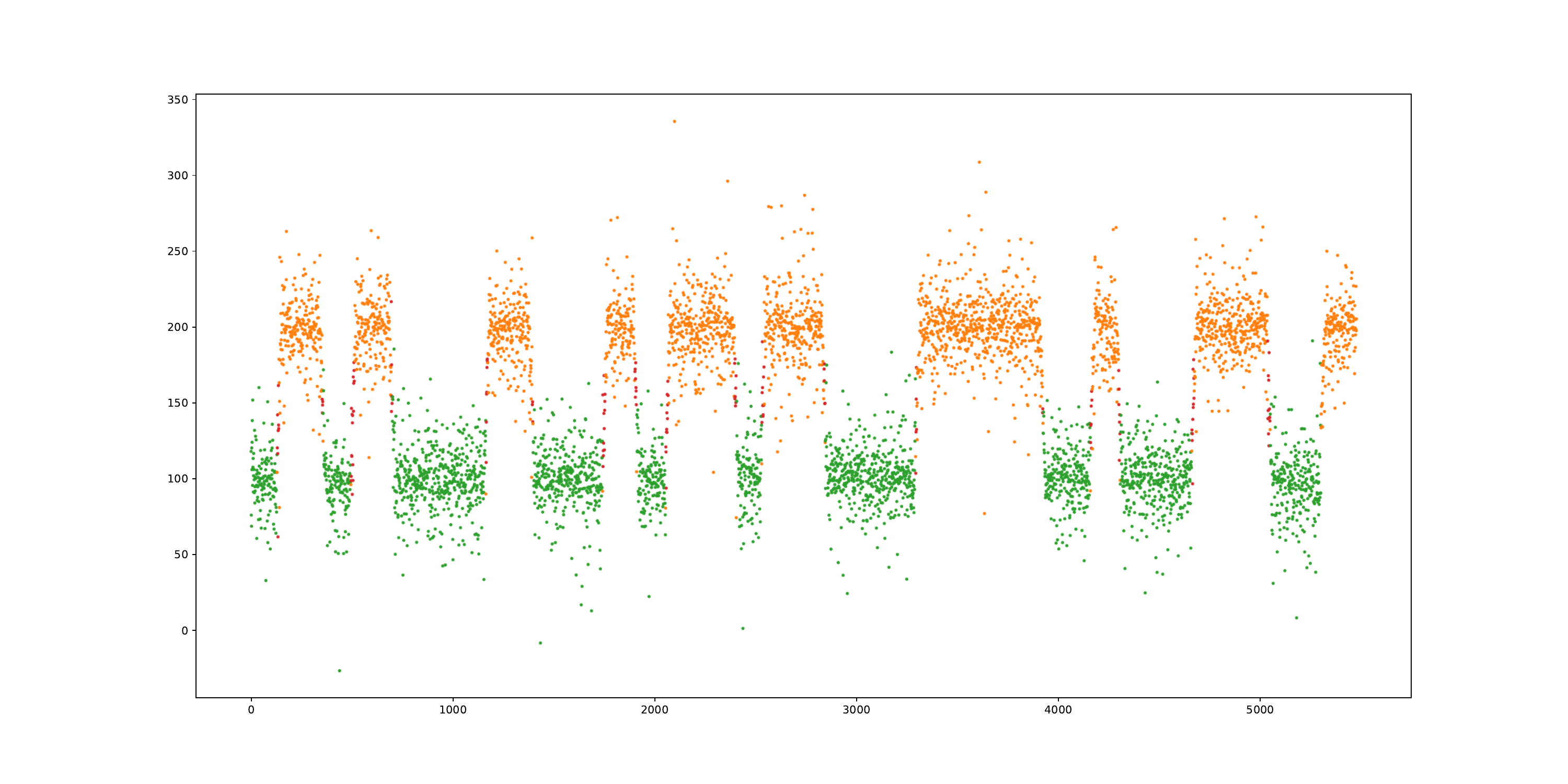}
        \caption{Segment cluster labels are learned in an unsupervised manner using the clustering by advanced density peaks algorithm. In this case, the clustering scheme identifies both metastable states and outputs a separate label for the transition regions. Parameters used to obtain this clustering were \(w=25, q=0.95.\)}
    \caption{Toy Laplace-based Trajectory Segmentation and Clustering}
\end{figure}

\begin{figure}[htbp]\label{toy-kmeans-adpc}
    \centering 
        \includegraphics[width=0.8\linewidth]{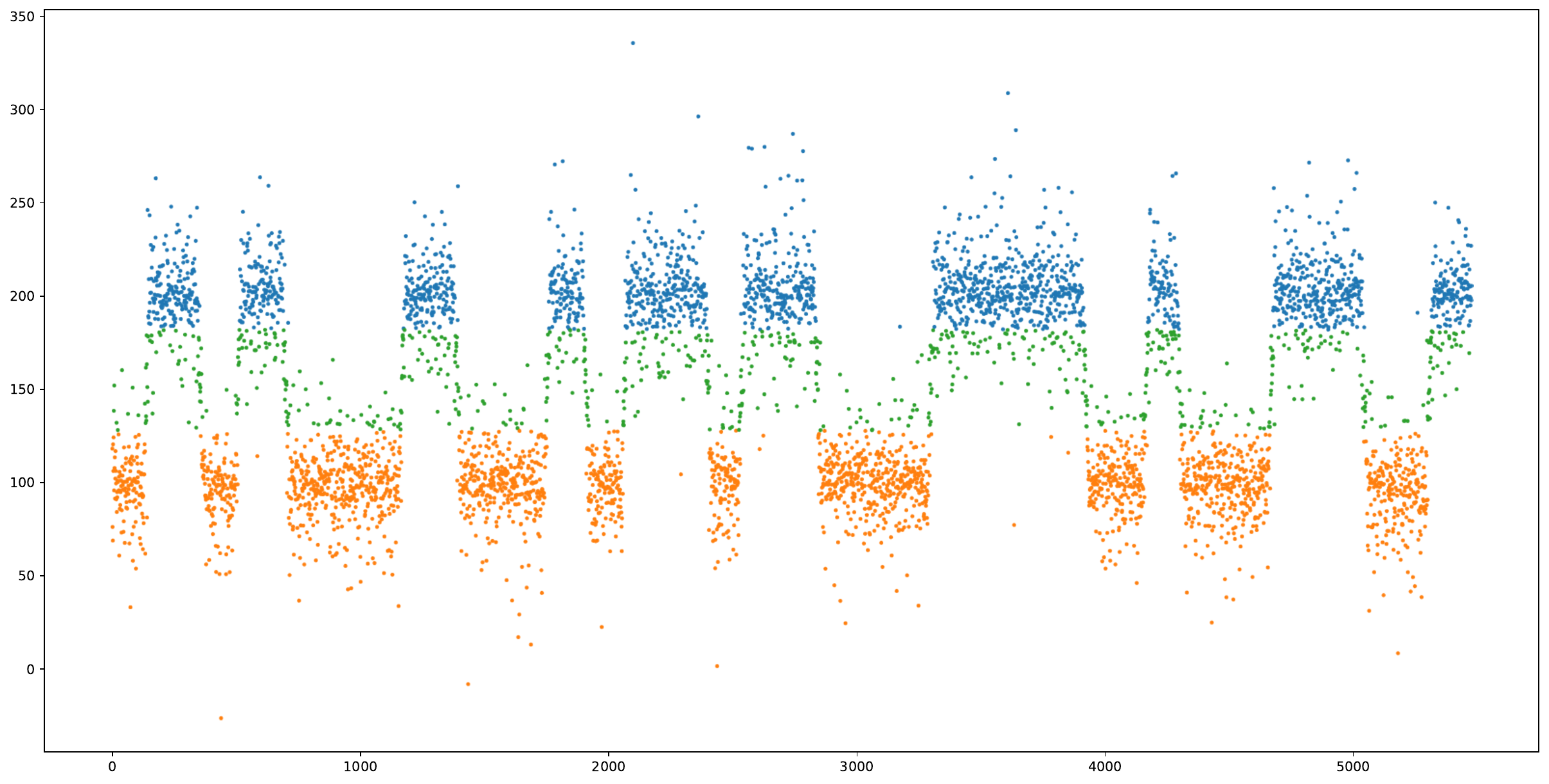}
        \caption{To compare our results against standard ``off-the-shelf" clustering methods, we cluster our data set with K-means. The resultant clustering fails to reflect the variance of the true underlying distributions and exhibits hard dividing hyperplanes.}
\end{figure}
\begin{figure}\label{toy-distributions}
\centering
\includegraphics[width=0.75\linewidth]{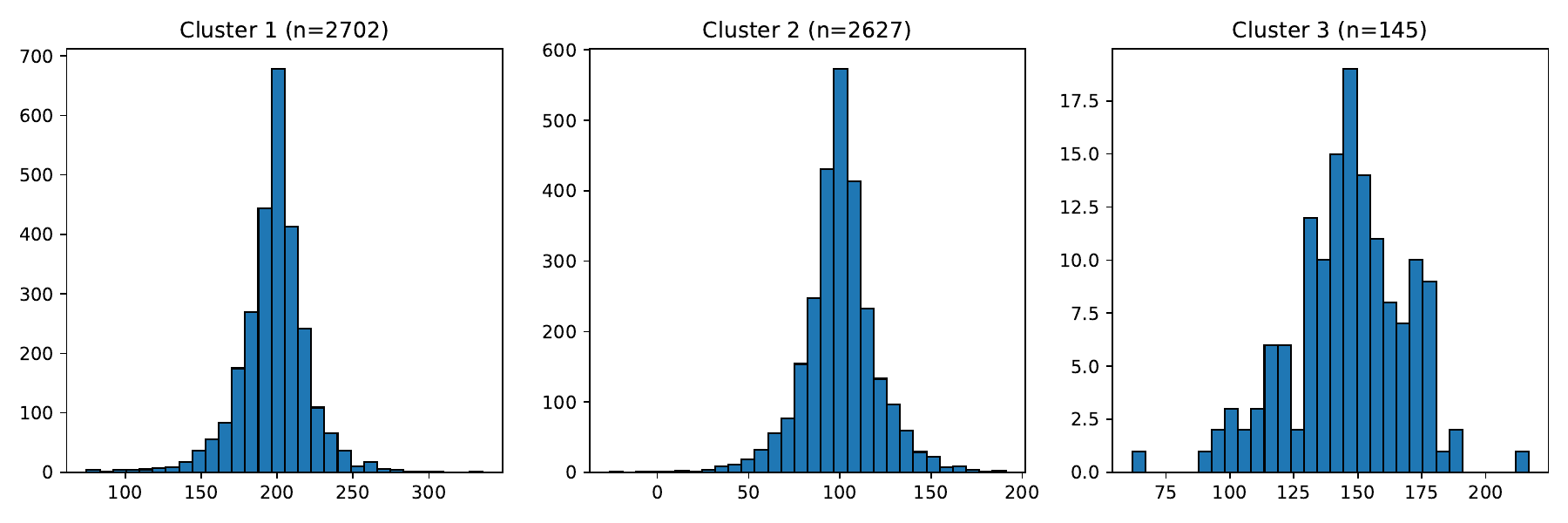}
\caption{Empirical distributions of the clusters in the toy data set learned by segment-based clustering with ADPC.}
\end{figure}
\begin{figure}\label{toy-kmeans-distributions}
\centering
\includegraphics[width=0.75\linewidth]{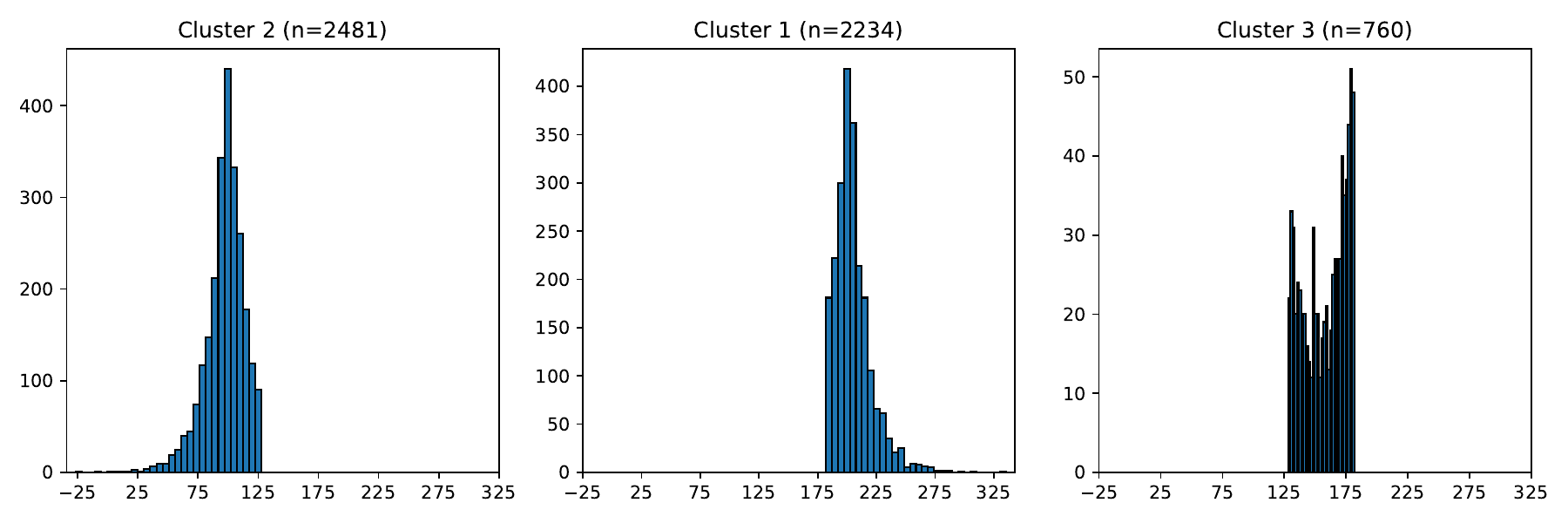}
\caption{Empirical distributions of the clusters in the toy data set learned by K-means.}
\end{figure}
\label{Langevin}
\subsubsection{Langevin Trajectory}
Next, we investigate the application of our algorithm to a sample Langevin trajectory. The energy landscape governing the particle trajectory has two distinct wells, centered at \(+1\) and \(-1\) with a barrier between the two which we can think of as regulating how difficult it is for the particle to transition from one state to the other.  The trajectories were simulated using OpenMM \cite{eastmanOpenMM7rapid2017} with particle mass set to 1amu, temperature set to 500K, and damping coefficient of \(10\text{ps}^{-1}.\)

We observe that after segmenting the trajectory with our algorithm and clustering via ADPC, three distinct states are identified, with one corresponding to each well, and the third capturing the transition between the two metastable states.

%\subsection{Langevin Trajectory}
\begin{figure}[htbp]\label{lang-cps}
\centering
%\begin{subfigure}[t]{0.45\textwidth}
\includegraphics[width=1\linewidth]{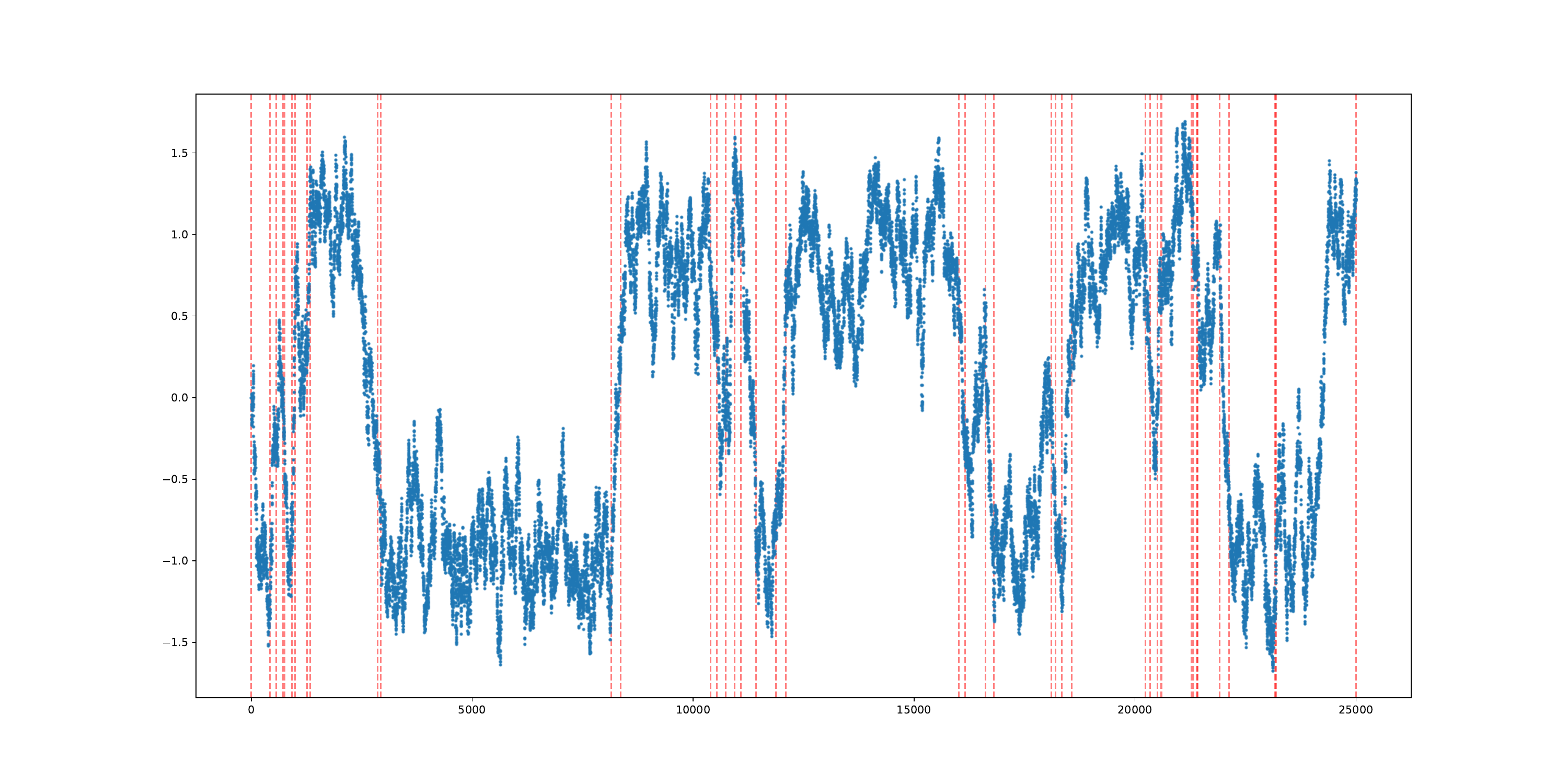}
\caption{The first 25,000 steps in a Langevin trajectory annotated with detected change points.}
%\end{subfigure}
\end{figure}
\begin{figure}[htbp]\label{lang-clusters}
\centering
%\begin{subfigure}[t]{0.45\textwidth}
\includegraphics[width=1\linewidth]{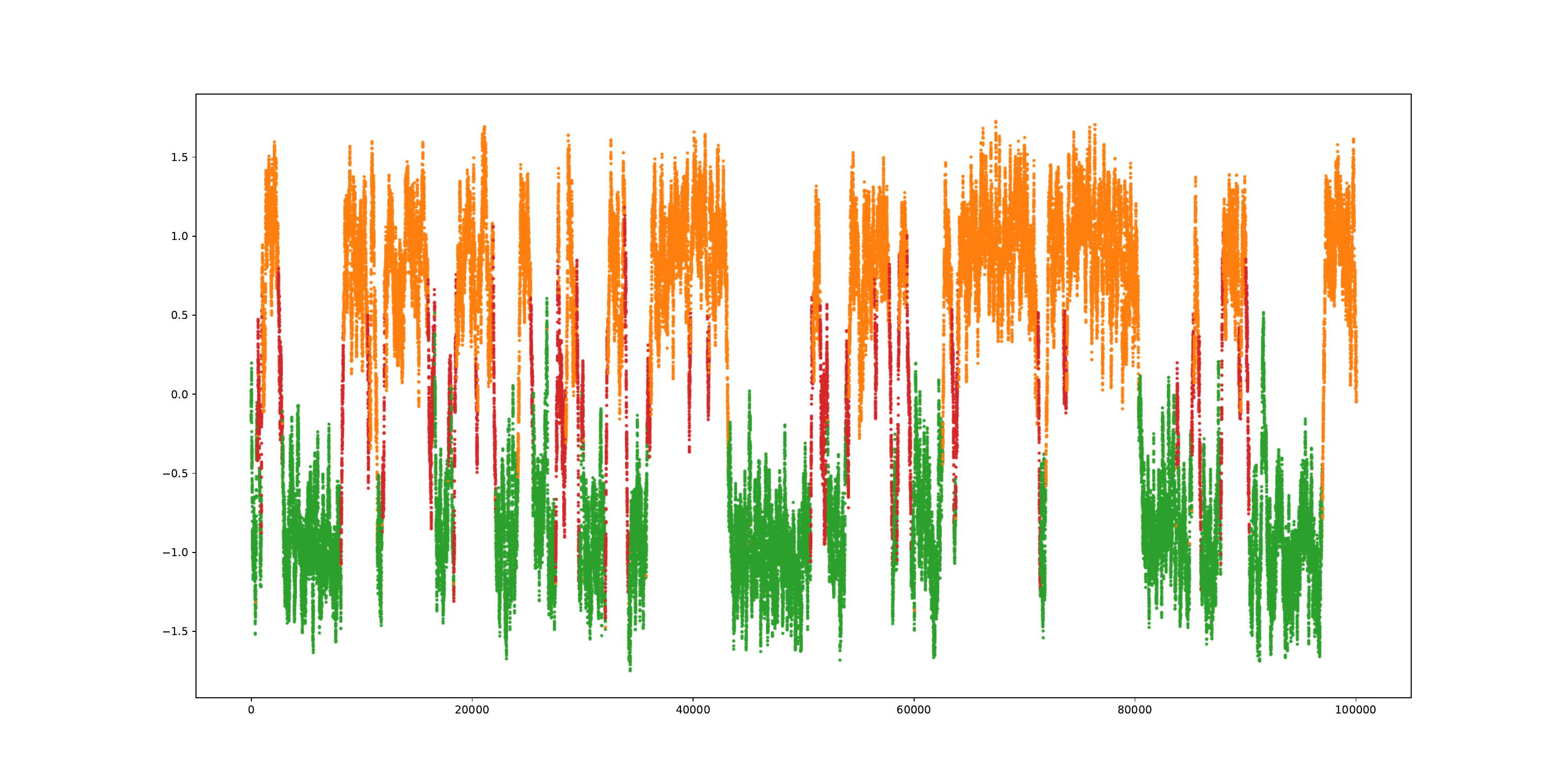}
\caption{Segment based clustering of a Langevin trajectory using advanced density peaks clustering. Parameters used to obtain this clustering are \(w = 215, q=0.86.\)}
%\end{subfigure}
    \caption{Langevin Trajectory Segmentation and Clustering}
\end{figure}

%\begin{figure}\label{lang-spectral-clustered}
%\includegraphics[width=0.75\linewidth]{./figs/langevin/langevin-spectral-clustering.pdf}
%\caption{Segment based clustering of a Langevin trajectory using spectral clustering with $K=3$}
%\end{figure}

%\begin{figure}\label{lang-sc-distributions}
%\includegraphics[width=0.5\linewidth]{./figs/langevin/langevin-spectral-distributions.pdf}
%\caption{Empirical distribution of clusters identified by segment-based clustering with spectral clustering}
%\end{figure}

\begin{figure}\label{lang-adpc-distributions}
\centering
\includegraphics[width=0.75\linewidth]{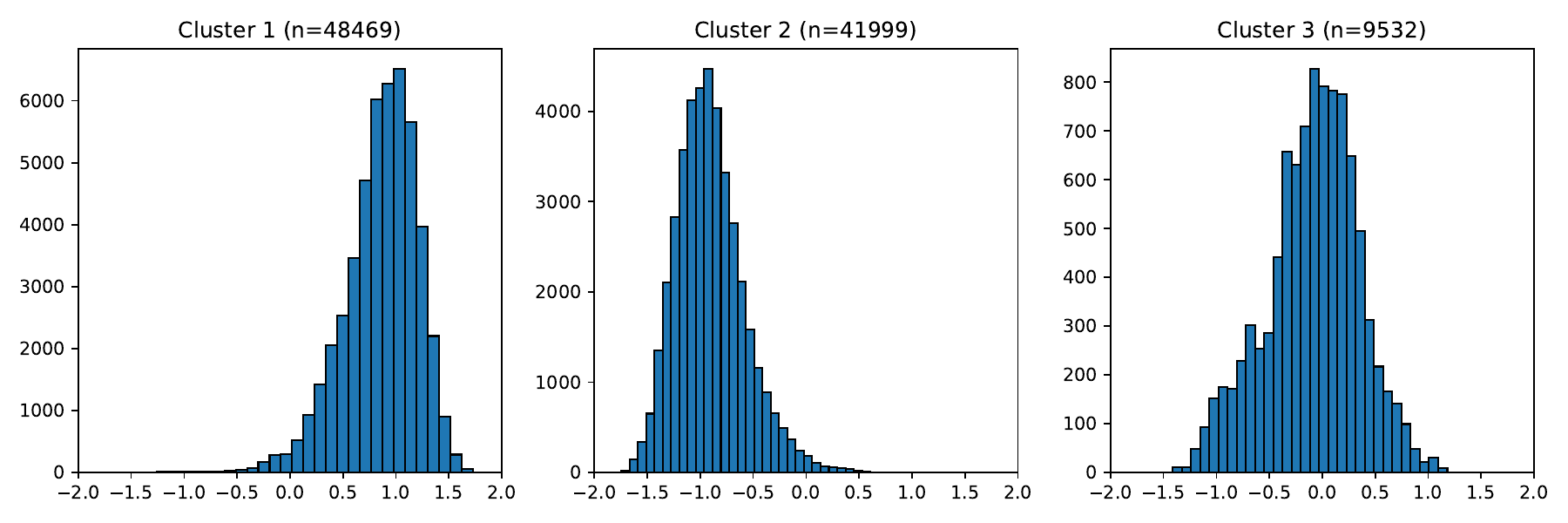}
\caption{Empirical distributions of clusters in the Langevin trajectory data set identified by segment-based clustering with advanced density peaks clustering}
\end{figure}

%\subsection{Prinz Potential Trajectory}
%\label{Prinz}

%%%
%\begin{figure}\label{prinz-spectral-clustered}
%\includegraphics[width=0.75\linewidth]{./figs/prinz/prinz-spectral-clustering.pdf}
%\caption{Trajectory of a particle in a Prinz potential clustered via segment-based clustering with a spectral clustering. The top four clusters by population are colored, while all points outside of these clusters appear in grey.}
%\end{figure}

%\begin{figure}\label{prinz-spectral-distributions}
%\includegraphics[width=0.5\linewidth]{./figs/prinz/prinz-spectral-distributions.pdf}
%\caption{Empirical distribution of the top four clusters identified via segment-based clustering using spectral clustering.}
%\end{figure}
%%%
\begin{figure}[htbp]\label{langevin-kmeans-adpc}
    \centering 
        \includegraphics[width=0.8\linewidth]{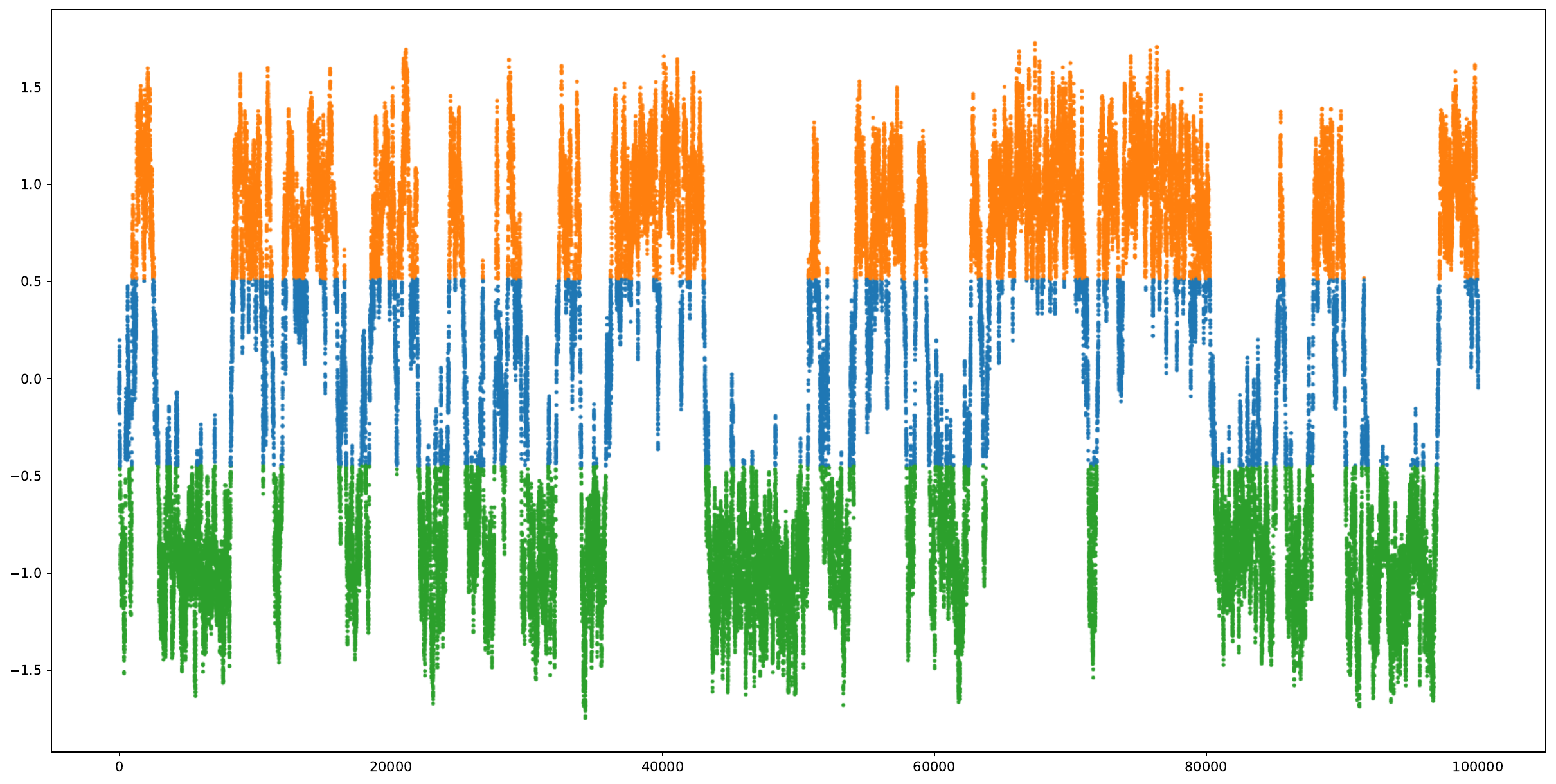}
        \caption{To compare our results against standard ``off-the-shelf'' clustering methods, we cluster our data set with K-means. The resultant clustering fails to reflect the variance of the true underlying distributions and exhibits hard dividing hyperplanes.}
\end{figure}
\begin{figure}\label{langevin-distributions}
\centering
\includegraphics[width=0.75\linewidth]{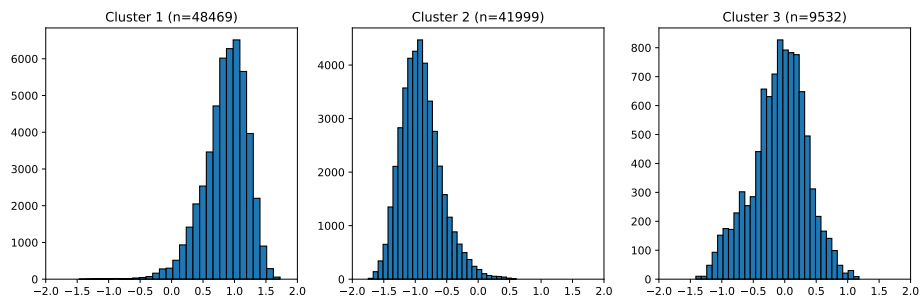}
\caption{Empirical distributions of the clusters in the Langevin data set learned by segment-based clustering with ADPC.}
\end{figure}
\begin{figure}\label{langevin-kmeans-distributions}
\centering
\includegraphics[width=0.75\linewidth]{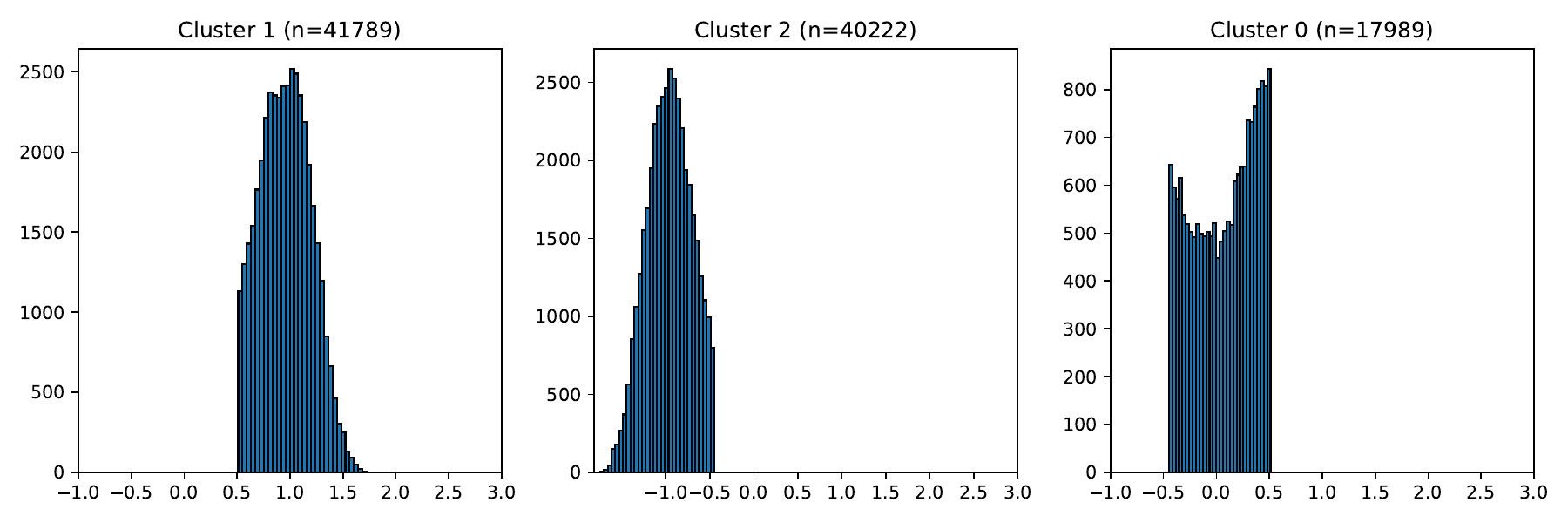}
\caption{Empirical distributions of the clusters in the Langevin data set learned by K-means.}
\end{figure}

\subsubsection{Particle in a Prinz Potential}
\begin{figure}\label{prinz-adpc-cps}
\centering
%\begin{subfigure}[t]{0.45\textwidth}
\includegraphics[width=1\linewidth]{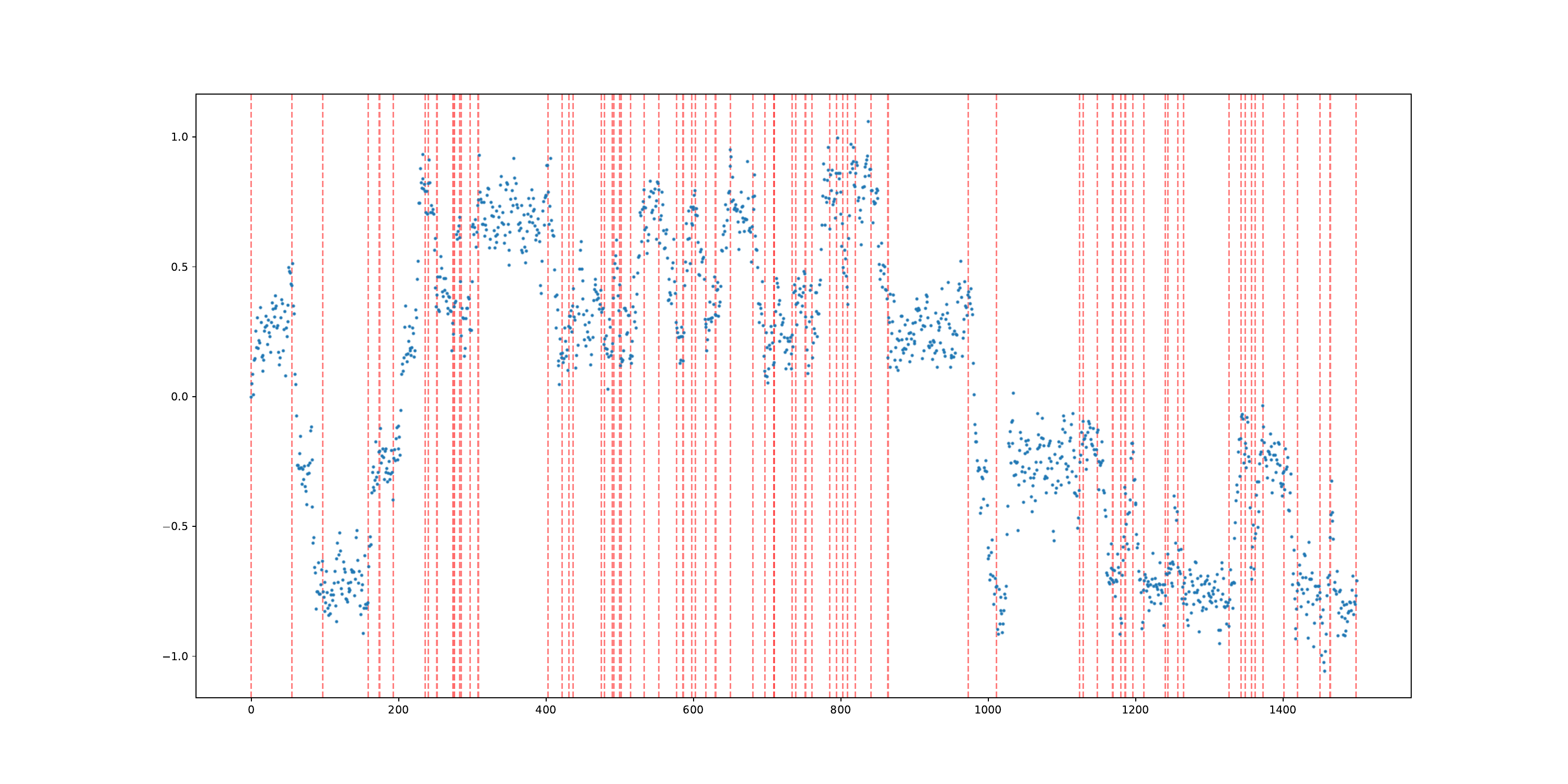}
\caption{Trajectory of a particle in a Prinz potential annotated with detected change points.}
    
%\end{subfigure}
\end{figure}
\begin{figure}\label{prinz-adpc-clustered}
\centering
%\begin{subfigure}[t]{0.45\textwidth}
\includegraphics[width=1\linewidth]{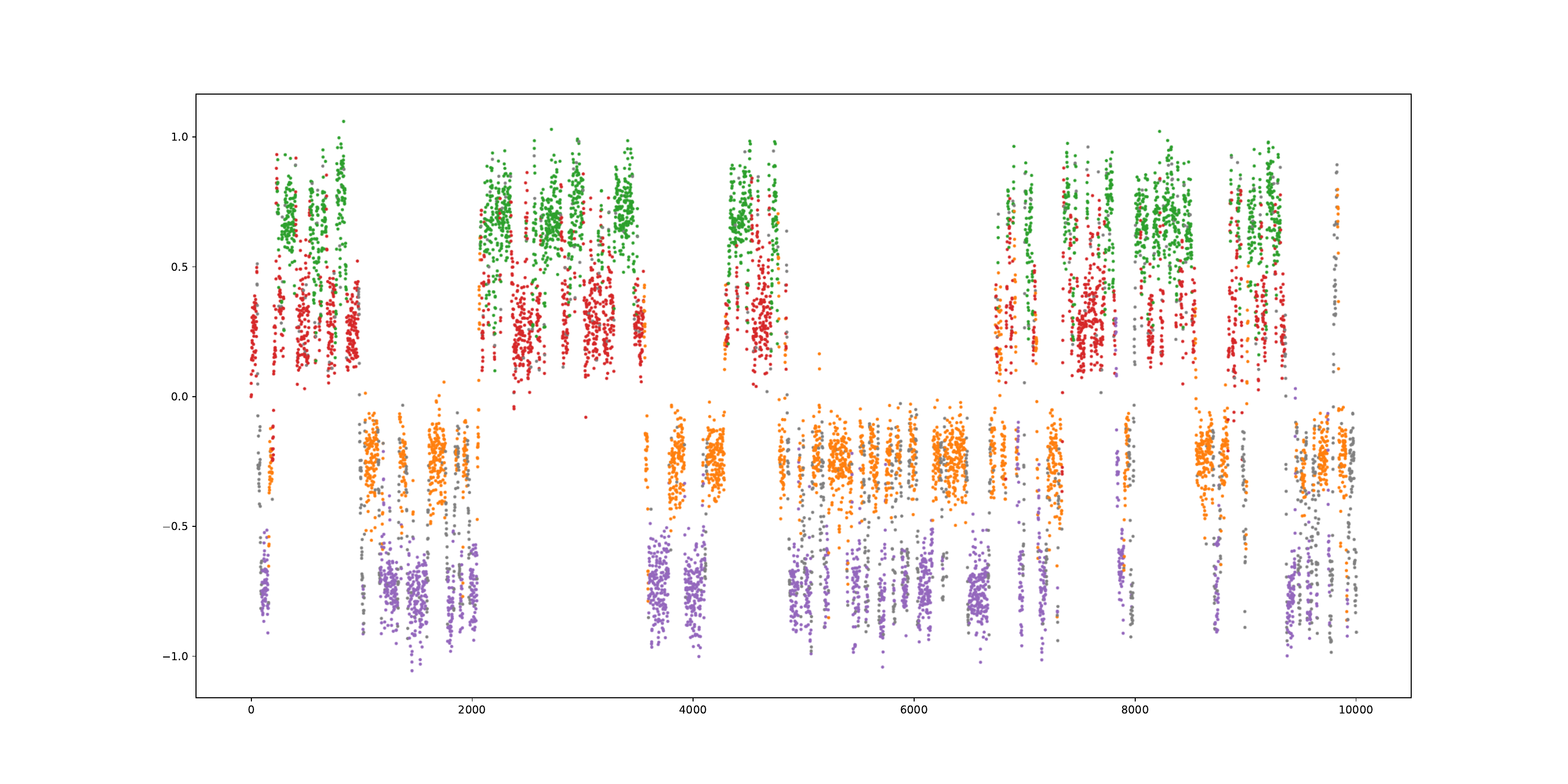}
\caption{Trajectory of a particle in a Prinz potential clustered via segment-based clustering with a advanced density peaks based clustering. The top four clusters by population are colored, while all points outside of these clusters appear in grey. Parameters used to obtain this clustering were \(w=16, q=0.5.\)}
%\end{subfigure}
\end{figure}
\begin{figure}\label{prinz-traj}
\includegraphics[width=\linewidth]{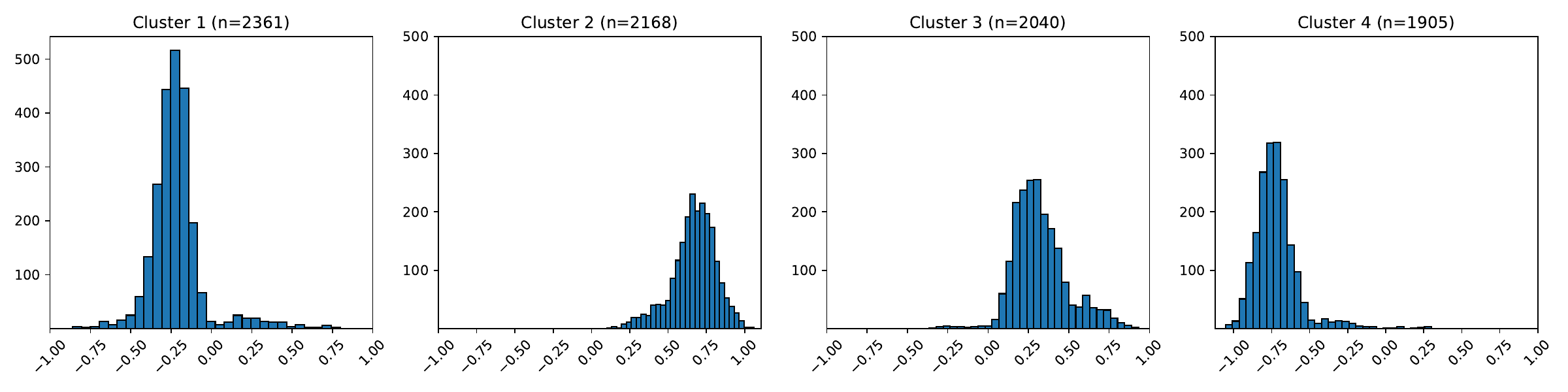}
\caption{Empirical distribution of the top four clusters in the Prinz potential data set identified via segment-based clustering using advanced density peaks based clustering.}
\end{figure}    
\begin{figure}\label{prinz-kmeans-distributions}
\includegraphics[width=1\linewidth]{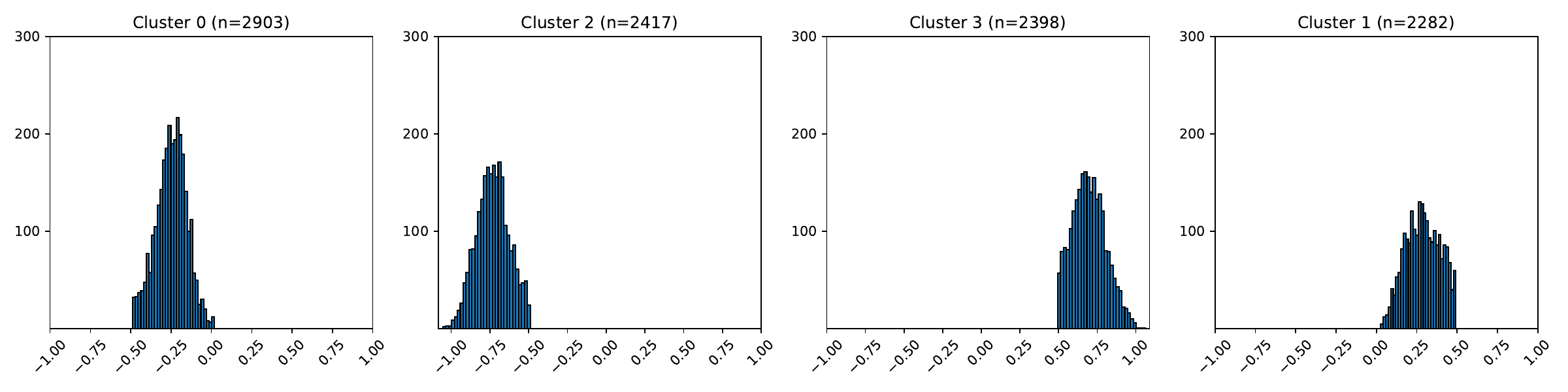}
\caption{Empirical distributions of the clusters in the Prinz data set learned by K-means.}
\end{figure}
Increasing the complexity of our example now, we apply our method to a particle trajectory in a Prinz potential. The trajectory is qualitatively similar to the previous Prinz trajectory --- there are four wells in the energy landscape that the particle can move between, with the height of the wall between each neighboring pair of wells differing somewhat. 

The trajectory is generated by an integrator of Euler-Maruyama type, with potential 
\[V(x) = 4\left(x^8 + 0.8e^{-80x^2} + 0.2e^{-80(x-0.5)^2} + 0.5e^{-40(x+0.5)^2}\right),\] and driven by the equation

\[x_{t+1} = x_t - \frac{h\nabla V(x_t)}{md} + \sqrt{\frac{2hkT}{md}}\eta_t\]

where \(\eta_t \sim \mathcal{N}(0,1)\) \cite{hoffmannDeeptimePythonLibrary2021}.

After segmenting the trajectory with our algorithm and clustering the the segments via ADPC, we find that five states are identified, one corresponding to each distinct well in the landscape, and an additional one that seems to correspond to the transition region between the two wells with the lowest barrier to transition.
\subsection{Molecular Dynamics Trajectories}
%\begin{figure}[htbp]
%\centering
%\begin{subfigure}[c]{0.45\textwidth}
%\includegraphics[width=\linewidth]{./figs/adp/adp-clusters.pdf}
%        \caption{Alanine dipeptide trajectory clustered with segment-based clustering with advanced density peaks.}
%\end{subfigure}
%\begin{subfigure}[c]{0.45\textwidth}
%\includegraphics[width=\linewidth]{./figs/adp/adp-top-individual-clusters.pdf}
%        \caption{Alanine dipeptide trajectory clustered with segment-based clustering with advanced density peaks.}
%\end{subfigure}
%    \caption{Alaline Dipeptide Clustering}
%\end{figure}
We now turn our attention to simulated molecular trajectories for Ace-Val-NMe (valine dipeptide, or VDP) which were generated with the GROMACS molecular simulations package \cite{abrahamGROMACSHighPerformance2015}. Typically, VDP trajectores are modeled via Markov State Modeling techniques \cite{bowmanIntroductionMarkovState2014}, where the data is clustered using a technique like PCCA++ \cite{roblitzFuzzySpectralClustering2013}, and then a reversible transition matrix is estimated based on the cluster identifications. Such trajectories are well-studied in the computational chemistry community and are commonly used for benchmarking algorithms in that domain \cite{damjanovicCATBOSSClusterAnalysis2021}.
Modern machine learning techniques are also employed to model these trajectories, wherein an optimal basis of nonlinear functions to describe the model is found via deep learning \cite{mardtVAMPnetsDeepLearning2018}, \cite{wuVariationalApproachLearning2020}.
Although these trajectories are simulated with several dozen spatial dimensions (VDP contains 28 atoms and hence in principle requires 84 dimensions to express the coordinates of each atomic position), corresponding to the atoms which constitute the molecule, it is well-known that the dimension of a VDP trajectory can be reduced down to \(d=3\), by tracking the so-called ``dihedral'' angles, known as \(\phi\), \(\psi\), and \(\chi\), which are periodic and hence the trajectory can be embedded onto the \(3\)-torus. 

\begin{figure}
    \centering
    \includegraphics[width=0.5\linewidth]{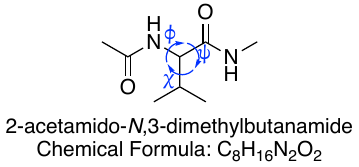}
    \caption{Structure of Ace-Val-NMe, annotated with backbone dihedral angles.}
\end{figure}

Consequently, this means that our approach must now be applied to a setting which is non-Euclidean.
This does not in principle impose a change to our approach, but it does mean that we can no longer rely on (\ref{ot1d}) for a computational boost, and instead we must deal with transport costs on the circle \cite{hundrieserStatisticsCircularOptimal2022}.
Our segmentation method allows ADPC to identify 9 distinct clusters which are clearly separated. 

Another data set of interest, and one which is, like VDP, commonly used for benchmarking MDS algorithms, is Ace-Ala-NMe (alanine dipeptide, or ADP). Unlike our previous data sets, ADP provides difficulties for our approach because of the rapid way it transitions between its metastable states, resulting in an undersampling problem.
By this we mean that among the collection of metastable states of interest, some appear infrequently and for short bursts of time, and even when trying to identify change points by hand for small portions of the trajectory, we find that segments contain sometimes as few as a dozen points. 
As discussed earlier, this would imply that it is necessary to take a very small window size \(w\) to avoid windows which contain multiple distinct states for a given angle, but of course choosing a \(w\) small means that the segments we see tend to be poor empirical representations of any cumulative distribution function.

Thankfully, VDP is a somewhat more cooperative MD trajectory, as we can observe in the relative stability of each of its angular trajectories. The 9 most signficantly populated clusters identified by our pipeline are clearly separated and account for approximately \(82\%\) of the total data, with the remainder belonging to still smaller clusters. Because VDP does not experience the slow and frequent transitions that ADP does, the cluster 
separation obtained for the VDP data set is much cleaner and is also readily apparent when examined at the componentwise level. Looking at the individual components of the VDP data, we see that the pipeline is identifying relatively subtle differences between states which are consistent in their appearance but with overlapping support. This is most pronounced in the \(\psi\) angle, where the algorithm identifies 3 distinctive clusters supported in the vicinity of \([-60,60]\) and 3 more distinctive clusters supported in the vicinity of \([60, 210]\).  

\begin{figure}[htbp]
    \centering
    \begin{subfigure}[b]{0.32\textwidth}
        \centering
        \includegraphics[width=\linewidth]{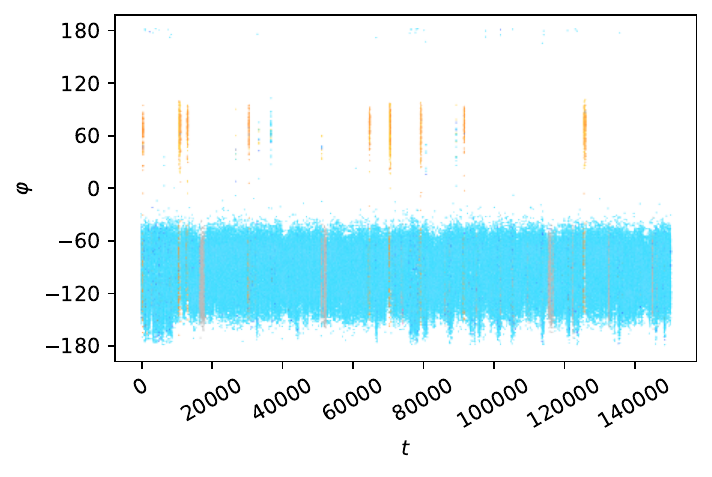}
        \caption{$\phi$ angle}
    \end{subfigure}
    \hfill
    \begin{subfigure}[b]{0.32\textwidth}
        \centering
        \includegraphics[width=\linewidth]{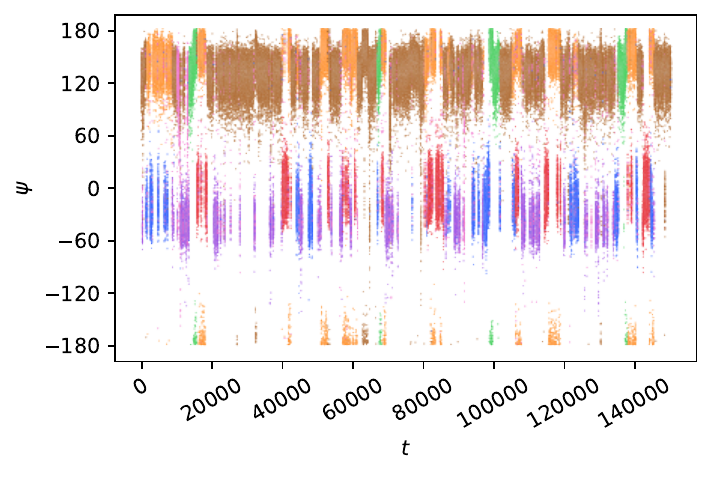}
        \caption{$\psi$ angle}
    \end{subfigure}
    \hfill
    \begin{subfigure}[b]{0.32\textwidth}
        \centering
        \includegraphics[width=\linewidth]{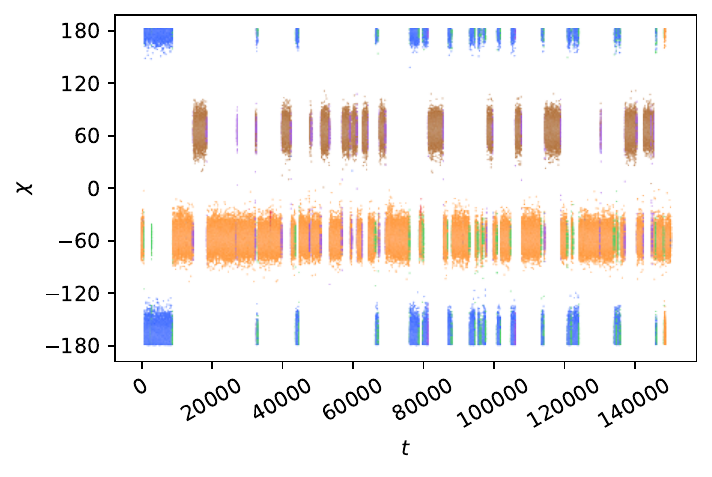}
        \caption{$\chi$ angle}
    \end{subfigure}
    \caption{VDP dihedral angles labeled via segment-based clustering with circular optimal transport. Each angle is put through the process of CPD and segment clustering. Then clusters are combined componentwise to produce a final labeling.}
    \label{fig:vdp-angles}
\end{figure}

\begin{figure}[htbp]
\centering
\begin{subfigure}[c]{0.45\textwidth}
        \includegraphics[width=0.9\linewidth]{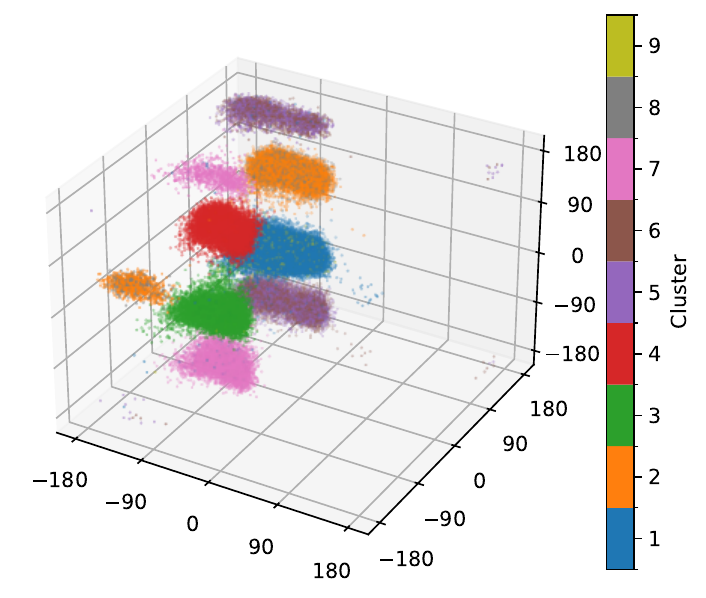}
        \caption{Final label assignments for valine dipeptide trajectory clustered with segment-based clustering with advanced density peaks. The parameters used for each angle are \(w = 50, q=0.95\)}
\end{subfigure}
\begin{subfigure}[c]{0.45\textwidth}
\includegraphics[width=1.1\linewidth]{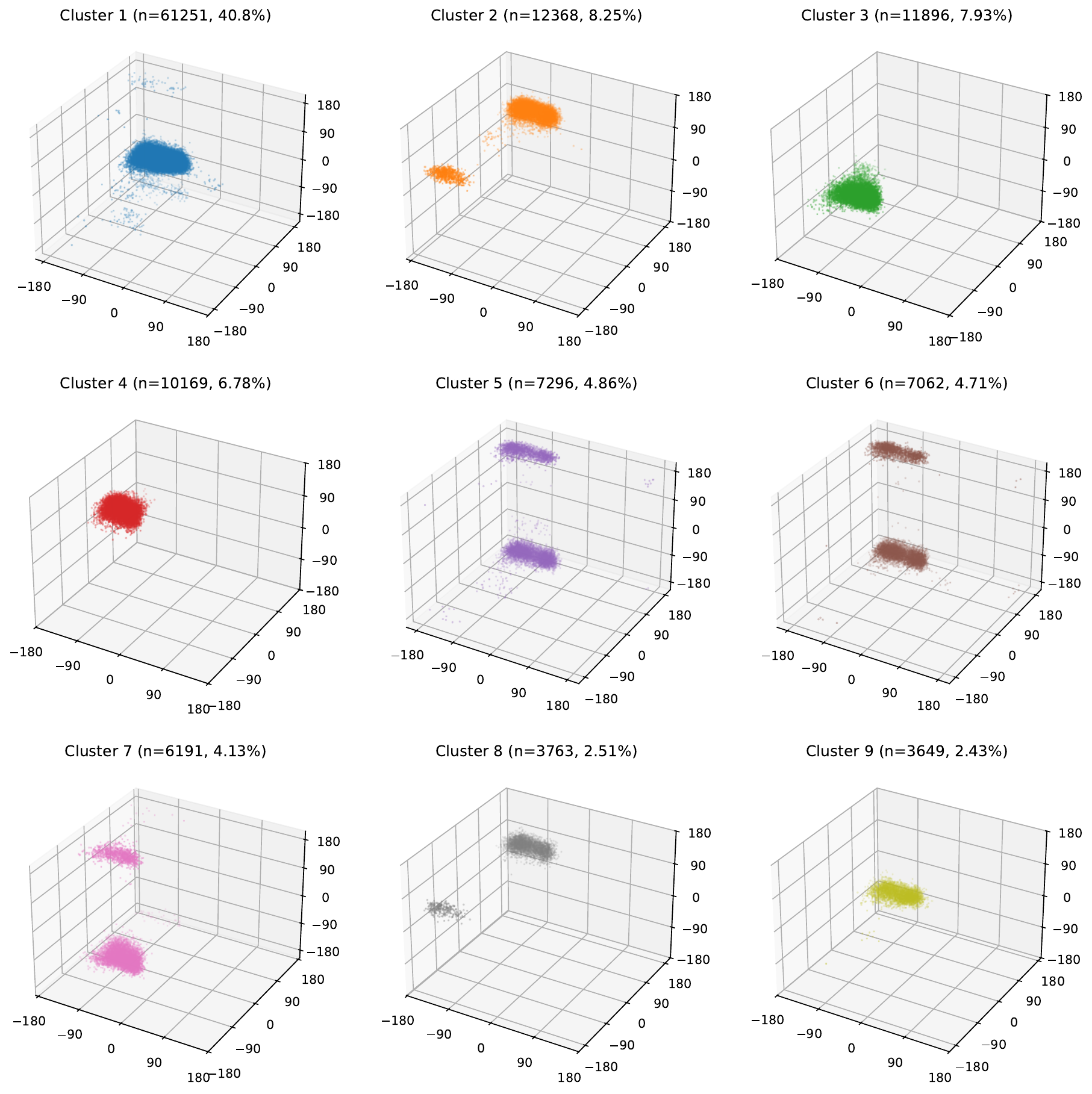}
        \caption{Valine dipeptide trajectory clustered with segment-based clustering with advanced density peaks. N.B.: percentages do not sum to 1, since only the top 9 most populous clusters are accounted for. Together these clusters account for \(82.43\%\) of the data set.}
\end{subfigure}
    \caption{Valine Dipeptide Clustering}
\end{figure}

\begin{figure}
    \centering
    \includegraphics[width=0.8\linewidth]{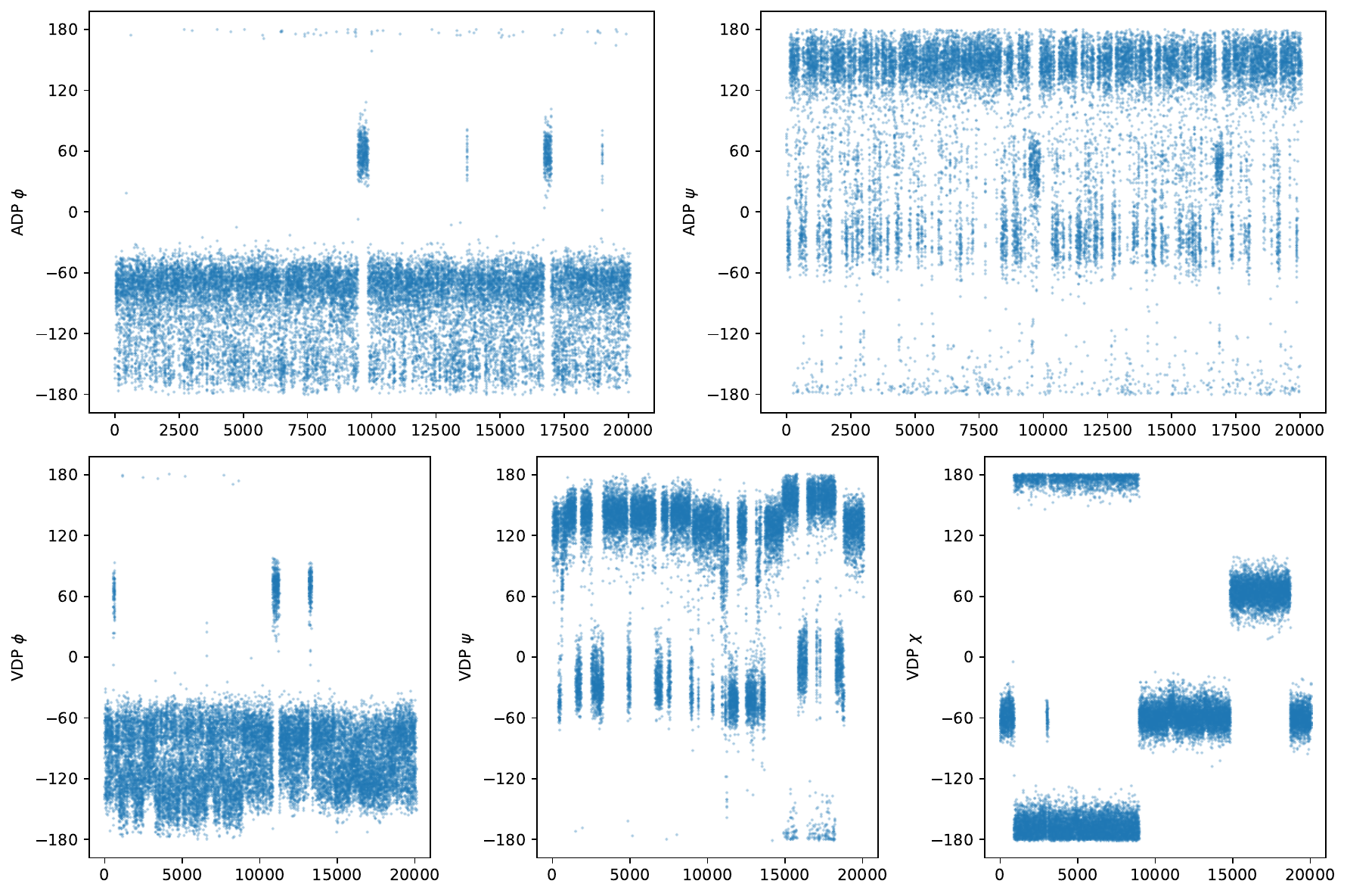}
    \caption{Comparison between the componentwise angles of ADP and VDP. Our algorithm struggles to handle ADP, as it bounces back and forth between states very rapidly, with a large amount of overlap between states, something which is particularly pronounced in the \(\psi\) angle of ADP (top right). VDP is a more tractable data set for us because of the cleaner and longer-lived changes.}
    \label{fig:placeholder}
\end{figure}

\subsection{Underwater Acoustics}
\label{UWA}

 Our real-world data set comes from an experiment conducted by the Naval Research Laboratory \cite{turgutMeasuredDepthdependenceWaveguide2016}. For this experiment, two underwater speakers were towed behind a vessel emitting four distinct signals, each repeated twice, over the course of 9 seconds.  Three of these samples were linear frequency modulations (LFM), wherein the signal starts at one frequency and increases linearly to another terminal frequency, and one was a continuous waveform. 
 
 We find that a modification of our algorithm applied to the spectrogram of the data is able to identify a suitable selection of change points for the samples. The simplification of our approach in this setting occurs because computing the spectrogram of a signal such as this naturally results in a sequentially ordered set of power spectral densities.  This differs in character from the reduction that occurs in mapping the high-dimensional atomic coordinate representation of VDP onto the \(3\)-torus. In that case, there is a one-to-one correspondence between samples represented in full atomic coordinates and samples represented by dihedral backbone angles. In the case of our underwater acoustics experiment, a different sort of compression occurs. Rather than computing the Wasserstein distances between the sampled voltages read by an analog-to-digital converter, we instead use a short-time Fourier transform (STFT) to examine the change in the frequency domain content overtime. The spectrogram obtained by computing the STFT of the raw voltage data, using a Tukey window with a shape parameter of 0.25 \cite{virtanenSciPy10Fundamental2020}, is an array where the columns represent the distribution of frequency content in the signal over some window of time, while the rows represent the concentration of frequency content for a certain band of frequencies over time. The resolution of the spectrogram is a function of the number of samples per window used in the STFT; in this case, we use \(n=512\). Overall this results in a significant reduction in the number of Wasserstein distances computed, since rather than sliding across every sample of the raw data, we now simple compute the Wasserstein distance between contiguous columns of the spectrogram. In \ref{spectrogram}, the raw number of samples in the time domain is \(421,848\), but the number of power densities to compare after computing the spectrogram is just \(941\). Because ground truth data is available, we also produce a table of precision and recall scores for our algorithms output as we did for the artificially generated toy-data set, with the slight modification that the tolerance now scales with number of samples per Fourier window, which is here taken to be \(n=512\). The sampling rate of the data is \(11718\) samples per second, thus the corresponding tolerance range can be interpreted as detecting a change within roughly \(0.1-1.0\) seconds.
 
\begin{figure}
    \centering
    
\begin{tabular}{ |p{2.5cm}|p{2.5cm}|p{2.5cm}|p{2.5cm}|p{2.5cm}|p{2.5cm}|}
 \hline
 \multicolumn{6}{|c|}{Underwater Acoustics Data Precision \& Recall} \\
 \hline
 \# True CPS & \# Detected CPS & Precision & Recall  & Max Precision & Max Recall\\ 
 \hline
 31 & 36 & 0.513  & 0.597 & 0.694 & 0.806 \\
 \hline
\end{tabular}

\caption{Table of precision and recall scores for the toy data set. Precision and recall are averaged over tolerances of \(tol=1536,\dots, 12800\)}

\end{figure}

\begin{figure}[htbp]
\begin{subfigure}[t]{0.45\textwidth}
    \includegraphics[width=1.1\linewidth]{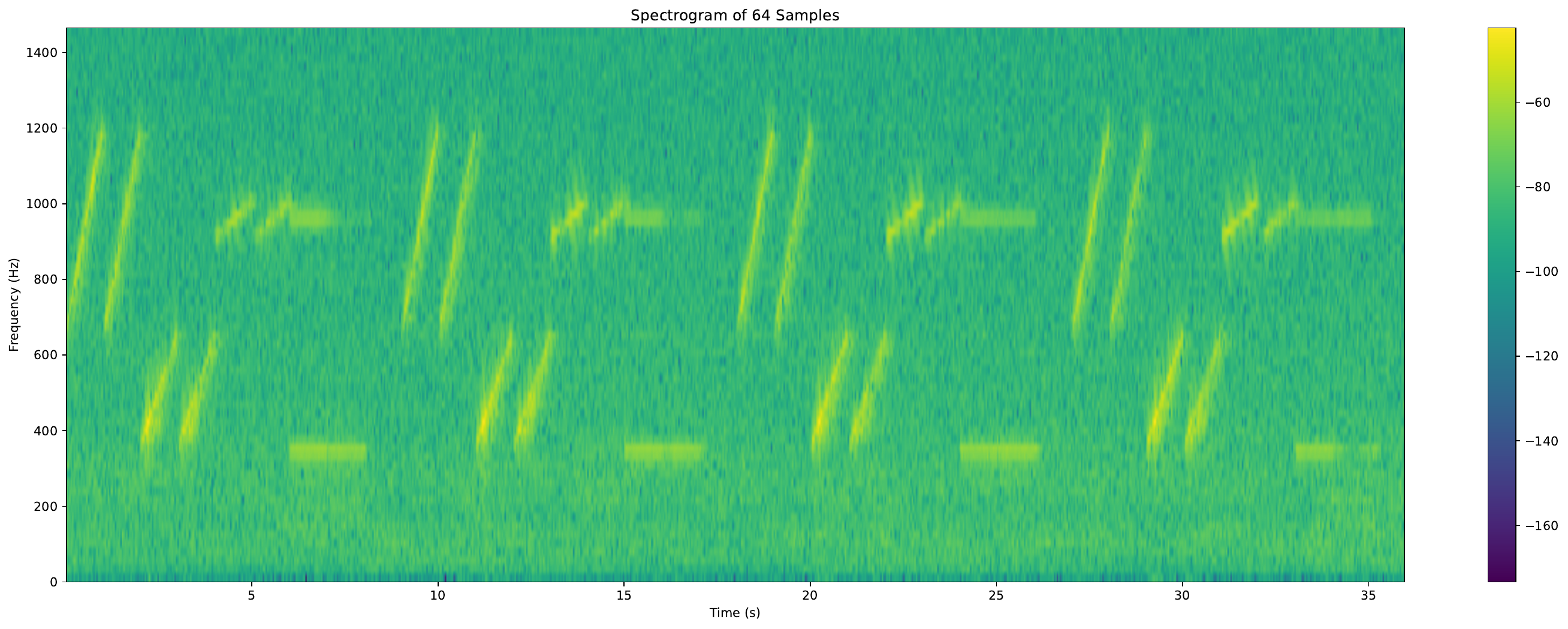}
\end{subfigure}
\begin{subfigure}[t]{0.45\textwidth}
    \includegraphics[width=1.1\linewidth]{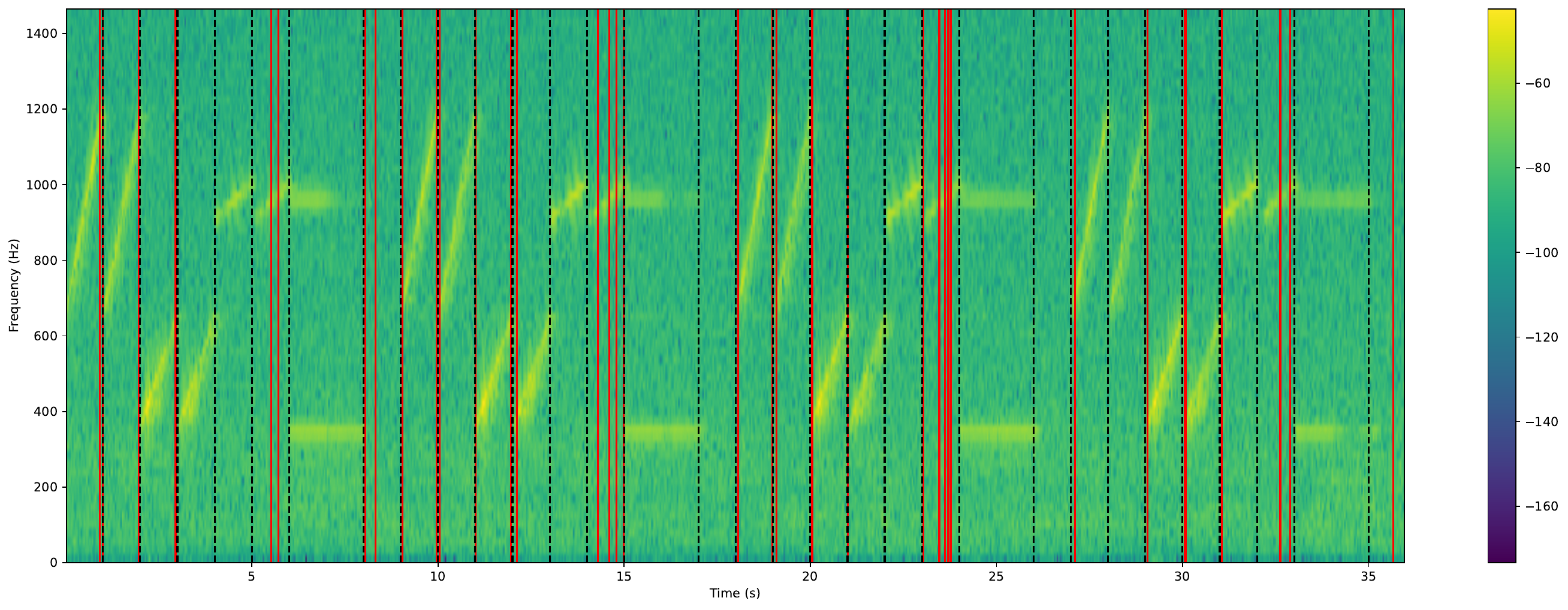}
\end{subfigure}
\caption{Underwater Acoustics Data annotated with detected change points. The parameters for this modified version of the algorithm are a windowsize of \(n=512\) for the short-time Fourier transform and \(q=0.96.\)}
\end{figure}\label{spectrogram}
\section{Discussion \& Conclusion}
\label{Discussion}
In this work, we have investigated the use of the Wasserstein metric as a tool for identifying change points in time series data and for clustering large data sets by lifting the problem into the space of distributions so that points can be considered as a part of a larger segment, thereby reducing the number of pairwise computations needed to form the similarity matrix. Our work extends previous considerations of the approach by removing parametric assumptions on the statistical structure of the data, and expanding the space of relevant data sets amenable to transport-based change point detection to include multi-dimensional data. We circumvent the curse of dimensionality by performing our change point detection and clustering on each component of the data set. 

In the future, this work could be extended by taking an approach to the clustering aspect of the problem which is more harmonious with the transport driven framework for the segmentation. In particular, one could leverage the geometry of Wasserstein barycenters to implement a \(K\)-means type algorithm, or potentially adapt a variation of density peaks-based clustering into the Wasserstein space. This approach would likely involve entropic regularization of the Wasserstein distance, due to the computational complexity involved in computing Wasserstein barycenters.  Also of interest is the optimization of the parameter choices \(w\) and \(q\). Elsewhere in the literature the problem of choosing an optimal window size for CPD algorithms based on the Kolmogorov-Smirnov test have been considered, and similar pursuits might be considered in the context of Wasserstein-based algorithms.

Furthermore, it is of interest to harmonize this approach with the Markov state modeling approach which is prominent in the computational chemistry community. 

\vspace{10pt}

\noindent\textbf{Acknowledgments:}  The authors gratefully acknowledge support from NSF DMS-2309519 and NSF DMS-2318894. We also extend our thanks to our colleagues at the Naval Research Laboratory Roger M. Oba and Laurie T. Fialkowski for helpful conversations and supply of the underwater acoustics dataset, as well as to Dr. Hannah J. Bates for her knowledge of chemical drawing software and willingness to help create figures.
\printbibliography

\end{document}